\newtheorem{theorem}{Theorem}[section]
\newtheorem{definition}[theorem]{Definition}
\newtheorem{lemma}[theorem]{Lemma}
\newtheorem{corollary}[theorem]{Corollary}
\newtheorem{remark}[theorem]{Remark}
\newtheorem{proposition}[theorem]{Proposition}
\newtheorem{problem}[theorem]{Problem}
\newcommand{\bd}{{\mathrm{bd}}\,}
\newcommand{\inte}{{\mathrm{int}}\,}
\newcommand{\reg}{{\mathrm{reg}}\,}
\newcommand{\conv}{{\mathrm{conv}}\,}
\newcommand{\ee}{\varepsilon}
\newcommand{\R}{\mathbb{R}}
\newcommand{\N}{\mathbb{N}}
\newcommand{\cS}{\mathcal{S}}
\def\cS{\mathcal{S}}
\def\cK{\mathcal{K}}
\def\sphere{S^{n-1}}
\def\N{\mathbb{N}}
\def\Rn{{\mathbb R^n}}
 \def\R{\mathbb{R}}
\def\cH{\mathcal{H}}
\def\cV{\widetilde{C}_{\phi}}
\def\ccV{\widetilde{C}_{\phi,\psi}}
\def\deV{\widetilde{C}_{H, \psi}}
\def\leV{\widetilde{C}_{G}}
\def\deV{\widetilde{C}_{G, \psi}}
\def\Oadd{\widetilde{+}_{\varphi}}
\def\teVq{\widetilde{V}_q}
\def\HeVq{\widetilde{V}_{q, \varphi}}
\def\ball{B^n}
\def\of{\overline{\Phi}}
\def\uf{\underline{\Phi}}
\def\wp{G}
\def\dveV{\widetilde{V}_G}
\def\eV{\overline{V}_{\phi}}
\def\teV{\underline{V}_{\phi}}
\def\veV{\widetilde{V}_{\phi,\varphi}}
\def\oveV{\breve{V}_{\phi,\varphi}}
\def\ovepV{\breve{V}_{\phi,\varphi,\psi}}
\def\veVt{\widetilde{V}_{\phi,\varphi_2}}
\def\bt{\begin{theorem}}
\def\et{\end{theorem}}
\def\bl{\begin{lemma}}
\def\el{\end{lemma}}
\def\br{\begin{remark}}
\def\er{\end{remark}}
\def\bc{\begin{corollary}}
\def\ec{\end{corollary}}
\def\bd{\begin{definition}}
\def\ed{\end{definition}}
\def\bp{\begin{proposition}}
\def\ep{\end{proposition}}
\begin{document}

\title{GENERAL VOLUMES IN THE ORLICZ-BRUNN-MINKOWSKI THEORY AND A RELATED MINKOWSKI PROBLEM I}
\author[Richard J. Gardner, Daniel Hug, Wolfgang Weil, Sudan Xing, and Deping Ye]
{Richard J. Gardner, Daniel Hug, Wolfgang Weil, Sudan Xing, and Deping Ye}
\address{Department of Mathematics, Western Washington University,
Bellingham, WA 98225-9063, USA} \email{richard.gardner@wwu.edu}
\address{Karlsruhe Institute of Technology, Department of Mathematics,
D-76128 Karlsruhe, Germany}
\email{daniel.hug@kit.edu}
\address{Karlsruhe Institute of Technology, Department of Mathematics,
D-76128 Karlsruhe, Germany} \email{wolfgang.weil@kit.edu}
\address{Department of Mathematics and Statistics, Memorial University of Newfoundland,\newline St.~John's, Newfoundland, Canada A1C 5S7} \email{sudanxing@gmail.com}
\address{Department of Mathematics and Statistics, Memorial University of Newfoundland,\newline St.~John's, Newfoundland, Canada A1C 5S7} \email{deping.ye@mun.ca}
\thanks{First author supported in
part by U.S.~National Science Foundation Grant DMS-1402929 and the Alexander von Humboldt Foundation.   Second and third authors supported in part by German Research Foundation (DFG) grants HU 1874/4-2, WE 1613/2-2, and FOR 1548. Fifth author supported in part by an NSERC grant.}
\subjclass[2010]{Primary: 52A20, 52A30; secondary: 52A39, 52A40} \keywords{curvature measure, dual curvature measure, Minkowski problem, Orlicz addition, Orlicz-Brunn-Minkowski theory.}
\maketitle
\pagestyle{myheadings}
\markboth{RICHARD J. GARDNER, DANIEL HUG,
WOLFGANG WEIL, SUDAN XING, AND DEPING YE}{THE ORLICZ-BRUNN-MINKOWSKI THEORY AND A MINKOWSKI PROBLEM}

\begin{abstract}
The general volume of a star body, a notion that includes the usual volume, the $q$th dual volumes, and many previous types of dual mixed volumes, is introduced.  A corresponding new general dual Orlicz curvature measure is defined that specializes to the $(p,q)$-dual curvature measures introduced recently by Lutwak, Yang, and Zhang.  General variational formulas are established for the general volume of two types of Orlicz linear combinations.  One of these is applied to the Minkowski problem for the new general dual Orlicz curvature measure, giving in particular a solution to the Minkowski problem posed by Lutwak, Yang, and Zhang for the $(p,q)$-dual curvature measures when $p>0$ and $q<0$. A dual Orlicz-Brunn-Minkowski inequality for general volumes is obtained, as well as dual Orlicz-Minkowski-type inequalities and uniqueness results for star bodies.  Finally, a very general Minkowski-type inequality, involving two Orlicz functions, two convex bodies, and a star body, is proved, that includes as special cases several others in the literature, in particular one due to Lutwak, Yang, and Zhang for the $(p,q)$-mixed volume.
\end{abstract}

\section{Introduction}\label{intro}
The classical Brunn-Minkowski theory was developed by Minkowski, Aleksandrov, and many others into the powerful tool it is today.  It focuses on compact convex sets and their orthogonal projections and metric properties such as volume and surface area, but has numerous applications beyond geometry, both within and outside mathematics.  In recent decades it has been significantly extended in various ways.  Germinating a seed planted by Firey, Lutwak \cite{L3} brought the $L_p$-Brunn-Minkowski theory to fruition.  A second extension, the Orlicz-Brunn-Minkowski theory, arose from work of Ludwig \cite{Lud}, Ludwig and Reitzner \cite{LudR}, and Lutwak, Yang, and Zhang \cite{LYZ7, LYZ8}. Each theory has a dual counterpart treating star-shaped sets and their intersections with subspaces, and these also stem from the pioneering work \cite{L1} of Lutwak.  The main ingredients in each theory are a distinguished class of sets, a notion of volume, and an operation, usually called addition, that combines two or more sets in the class.  Each theory has been described and motivated at length in previous work, so we refer the reader to Schneider's classic treatise \cite{Sch} and the introductions of the articles \cite{GHW,GHW2014,ghwy15}, and will focus henceforth on the contributions made in the present paper.

Our work is inspired by the recent groundbreaking work of Huang, Lutwak, Yang, and Zhang \cite{LYZActa} and Lutwak, Yang, and Zhang \cite{LYZ-Lp}.  In \cite{LYZActa}, the various known measures that play an important part in the Brunn-Minkowski theory---the classical area and curvature measures and their $L_p$ counterparts---were joined by new dual curvature measures, and surprising relations between them were discovered, revealing fresh connections between the classical and dual Brunn-Minkowski theories.  These connections were reinforced in the sequel \cite{LYZ-Lp}, which defined the very general $L_p$ dual curvature measures that involve both convex and star bodies and {\em two} real parameters $p$ and $q$. With each measure comes the challenge of solving the corresponding Minkowski problem, a fundamental endeavor that goes back to the original work of Minkowski and Aleksandrov.

The present paper focuses on the Orlicz-Brunn-Minkowski theory.  Just as Orlicz spaces generalize $L_p$ spaces, the Orlicz theory brings more generality, but presents additional challenges due to the loss of homogeneity.  Here we introduce very general dual Orlicz curvature measures which specialize to both the $L_p$ dual curvature measures in \cite{LYZ-Lp} and the dual Orlicz curvature measures defined in \cite{XY2017-1, ZSY2017}.  We state the corresponding Minkowski problem and present a partial solution, though one general enough to include those from \cite{XY2017-1, ZSY2017} as well as solving the case $p>0$ and $q<0$ of the Minkowski problem posed in \cite[Problem~8.1]{LYZ-Lp}.  (After we proved our result, we learned that B\"{o}r\"{o}czky and Fodor \cite{BorFod} have solved the case $p>1$ and $q>0$.  The authors of \cite{BorFod} state that
Huang and Zhao have also solved the case $p>0$ and $q<0$ in the unpublicized manuscript \cite{HuangZhao}.) The Minkowski problem in \cite[Problem~8.1]{LYZ-Lp} requires finding, for given $p,q\in \R$, $n$-dimensional Banach norm $\|\cdot\|$, and $f:\sphere\to [0,\infty)$, an $h:\sphere\to (0,\infty)$ that solves the Monge-Amp\`{e}re equation
\begin{equation}\label{LYZPDE}
h^{1-p}\,\|\bar{\nabla}h+h\iota \|^{q-n}\det(\bar{\nabla}^2h+hI)=f
\end{equation}
on the unit sphere $\sphere$, where $\bar{\nabla}$ and $\bar{\nabla}^2$ are the gradient vector and Hessian matrix of $h$, respectively, with respect to an orthonormal frame on $\sphere$, $\iota$ is the identity map on $\sphere$, and $I$ is the identity matrix.  The equation (\ref{LYZPDE}) is derived in \cite[(5.8), p.~116]{LYZ-Lp}; previous Minkowski problems correspond to taking $p=0$ and $\|\cdot\|=|\cdot|$, the Euclidean norm (the dual Minkowski problem from \cite{LYZActa}), $q=0$ and $\|\cdot\|=|\cdot|$ (the $L_p$ Aleksandrov problem), and $q=n$ (the $L_p$ Minkowski problem, which reduces to the classical Minkowski problem when $p=1$).

We refer the reader to the introductions of \cite{LYZActa, LYZ-Lp} and to \cite[Sections~8.2 and 9.2]{Sch} for detailed discussions and references to the extensive literature on these problems.

Also introduced here are new generalizations of volume.  Let $G:(0,\infty)\times S^{n-1}\to (0,\infty)$ be continuous (see Section~\ref{prel} for definitions and notation).  The general dual volume $\dveV(K)$ of a star body $K$ is defined by
$$\dveV(K)=\int_{S^{n-1}}G(\rho_K(u),u)\,du,$$
where $\rho_K$ is the radial function of $K$, giving the distance from the origin to the boundary of $K$ in the direction $u$, while the general volume of a convex body $K$ is defined by
$$V_G(K)=\int_{S^{n-1}}G(h_K(u),u)\,dS(K,u),$$
where $h_K$ is the support function and $S(K,\cdot)$ is the surface area measure of $K$.  (Integrals with respect to the $i$th area measures $S_i(K,\cdot)$, $1\le i\le n-1$, may also be considered.)  The novel feature here is the extra argument $u$ in $G$; this allows $\dveV(K)$ and $V_G(K)$ to include not only the usual volume and variants of it, but also many of the mixed and dual mixed volumes that have previously been found useful in the literature.  The same function $G(t,u)$ is behind our general dual Orlicz curvature measures (see Definition~\ref{general dual Orlicz curvature measure-11-27}).  The present paper focuses mainly on the dual theory, so from the outset we work with the general dual volume $\dveV(K)$ and obtain variational formulas (necessary for the Minkowski problem) for it. The corresponding study for $V_G(K)$ and the classical theory is to be carried out in \cite{ghwxy}. It should be mentioned that in this context, Orlicz-Minkowski problems were first investigated by Haberl, Lutwak, Yang, and Zhang \cite{HLYZ}.

The general dual Orlicz curvature measures mentioned above arise naturally from the general dual volumes and are denoted by $\deV(K,\cdot)$, where $G:(0,\infty)\times S^{n-1}\to (0,\infty)$ and $\psi:(0,\infty)\to(0,\infty)$ are continuous.  The corresponding Minkowski problem is:

{\em For which nonzero finite Borel measures $\mu$ on $\sphere$ and continuous functions $\wp$ and $\psi$ do there exist $\tau\in \R$ and $K\in \cK_{o}^n$ such that $\mu=\tau\,\deV(K,\cdot)$?}

In our partial solution, presented in Theorem~\ref{solution-general-dual-Orlicz-main theorem-11-27} below, the lack of homogeneity necessitates extra care in the variational method we employ.  The problem requires finding, for given $G$, $\psi$, and $f:\sphere\to [0,\infty)$, an $h:\sphere\to (0,\infty)$ and $\tau\in \R$ that solve the Monge-Amp\`{e}re equation
\begin{equation}\label{new}
\frac{\tau h}{\psi\circ h}\,P(\bar{\nabla}h+h\iota)
\,\det(\bar{\nabla}^2h+hI)=f,
\end{equation}
where $P(x)=|x|^{1-n}G_t(|x|,\bar{x})$. Equation (\ref{new}) is derived before Theorem~\ref{solution-general-dual-Orlicz-main theorem-11-27} in a brief discussion where we also show that (\ref{new}) is more general than (\ref{LYZPDE}).

In a third contribution, we prove very general Orlicz inequalities of the Minkowski and Brunn-Minkowski type which include others in the literature, such as \cite[Theorem~7.4]{LYZ-Lp}, as special cases.  Some general uniqueness theorems are also demonstrated.

The paper is organized as follows.  The preliminary Section~\ref{prel} gives definitions and notation, as well as the necessary background on two types of Orlicz linear combination.  In Section~\ref{section 2.1}, we define the new general dual volumes and general dual Orlicz curvature measures.  Sections~\ref{section4} and~\ref{section5} contain our variational formulas.  In Section~\ref{section-6}, we state our Minkowski problem and provide a partial solution (see Problem~\ref{Minkowski-c-11-28} and Theorem~\ref{solution-general-dual-Orlicz-main theorem-11-27}).  Dual Orlicz-Brunn-Minkowski inequalities can be found in Section~\ref{section7} and dual Orlicz-Minkowski inequalities and uniqueness results are the focus of Section~\ref{section8}.
	
\section{Preliminaries and Background}\label{prel}
We use the standard notations $o$, $\{e_1, \ldots, e_n\}$, and $\| \cdot \|$ for the origin, the canonical orthonormal basis, and a norm, respectively, in $\Rn$. The Euclidean norm and inner product on $\Rn$ are denoted by $|\cdot|$ and $\langle \cdot, \cdot \rangle$, respectively. Let $\ball=\{x\in \Rn: |x|\leq 1\}$ and $\sphere=\{x\in \Rn: |x|=1\}$ be the unit ball and sphere in $\Rn$.  The characteristic function of a set $E$ is signified by $1_E$.

We write ${\mathcal{H}}^k$ for $k$-dimensional Hausdorff measure in $\R^n$, where $k\in \{1,\dots,n\}$.  For compact sets $E$, we also write $V_n(E)={\mathcal{H}}^n(E)$ for the {\em volume} of $E$.  The notation $dx$ means $d{\mathcal{H}}^k(x)$ for the appropriate $k=1,\dots,n$, unless stated otherwise. In particular, integration on $\sphere$ is usually denoted by $du=d{\mathcal{H}}^{n-1}(u)$.

The class of nonempty compact convex sets in $\R^n$ is written $\cK^n$. We will often work with $\cK_o^n$, the set of {\em convex bodies} (i.e., compact convex subsets in $\Rn$ with nonempty interiors) {\em containing $o$ in their interiors}.  For the following information about convex sets, we refer the reader to \cite{Gruber2007, Sch}. The standard metric on $\cK^n$ is the {\em Hausdorff metric} $\delta(\cdot, \cdot)$, which can be defined by
$$\delta(K, L)=\|h_K-h_ {L}\|_{\infty} =\sup_{u\in \sphere} |h_K(u)-h_{L}(u)|$$
for $K, L\in \cK^n$, where $h_K: \sphere\rightarrow \R$ is the {\em support function} of $K\in \cK^n$, given by $h_K(u)=\sup_{x\in K}\langle u, x\rangle$ for $u\in \sphere$. We say that the sequence $K_1, K_2,\ldots$ of sets in $\cK^n$ {\em converges} to $K\in \cK^n$ if and only if $\lim_{i\rightarrow \infty} \delta(K_i, K)=0$.  The {\em Blaschke selection theorem} states that every bounded sequence in $\cK^n$ has a subsequence that converges to a set in $\cK^n$. The {\em surface area measure} $S(K,\cdot)$ of a convex body $K$ in $\R^n$ is defined for Borel sets $E\subset S^{n-1}$ by
\begin{equation}\label{surface:area:1}
S(K,E)=\mathcal{H}^{n-1}(\nu_{K}^{-1}(E)),
\end{equation}
where $\nu_{K}^{-1}(E)=\{x\in \partial K: \  \nu_K(x)\in E\}$ is the {\em inverse Gauss map} of $K$ (see Section~\ref{Maps}).

Let $\mu$ be a nonzero finite Borel measure on $\sphere$.  We say that $\mu$ is {\em not concentrated on any closed hemisphere}  if \begin{equation}\label{condition for Minkowski problem}
\int _{\sphere} \langle u, v\rangle_+ \,d\mu(u)>0\ \ \ \mathrm{for~} v\in \sphere,
\end{equation}
where  $a_+=\max\{a, 0\}$ for $a\in \R$. We write $|\mu|=\mu(S^{n-1})$.

As usual, $C(E)$ denotes the class of continuous functions on $E$ and we shall write $C^+(E)$ for the {\em strictly} positive functions in $C(E)$. Let $\Omega\subset  S^{n-1}$ be a closed set not contained in any closed hemisphere of $S^{n-1}$.  For each   $f\in C^+(\Omega)$, one can define a convex body $[f]$, the {\em Aleksandrov body} (or {\em Wulff shape}), associated to it, by setting
$$
[f]= \bigcap_{u\in\Omega}\big\{x\in\R^n:\langle x, u\rangle\leq f(u)\big\}.
$$
In particular, when $\Omega=\sphere$ and $f=h_K$ for $K\in \cK^n$, one has
$$K=[h_K]=\bigcap_{u\in\sphere}
\big\{x\in\R^n:\langle x, u\rangle\leq h_K(u)\big\}.$$
Note that
$$H(K,u)=\big\{x\in\R^n: \langle x, u\rangle=h_K(u)\big\}$$
is the supporting hyperplane of $K$ in the direction $u\in \sphere$.

A set $L$ in $\R^n$ is {\em star-shaped about $o$} if $o\in L$ and $\lambda x\in L$ for $x\in L$ and $\lambda\in [0, 1]$. For each such $L$ and for $x\in \Rn\setminus\{o\}$, let
$$\rho_L(x)=\sup\{\lambda>0: \lambda x\in L\}.$$
Then $\rho_L: \Rn\setminus \{o\}\rightarrow \R$ is called the {\em radial function} of $L$. The function $\rho_L$ is homogeneous of degree $-1$, that is, $\rho_L(rx)=r^{-1}\rho_L(x)$ for $x\in \R^n\setminus \{o\}$.  This allows us to consider $\rho_L$ as a function on $\sphere$.  Let $\cS^n$ be the class of star-shaped sets in $\R^n$  about $o$ whose radial functions are bounded Borel measurable functions on $\sphere$.  The class of $L\in \cS^n$ with $\rho_L>0$ is denoted by $\cS_+^n$ and the class $\cS_{c+}^n$ of {\em star bodies} comprises those $L\in \cS_+^n$ such that $\rho_L$ is continuous on $\sphere$.  If $L\in \cS_+^n$, then $\rho_L(u)u\in \partial L$ and $\rho_L(x)=1$ for $x\in \partial L$, the boundary of $L$.   The natural metric on $\cS^n$ is the {\em radial metric} $\widetilde{\delta}(\cdot, \cdot)$, which can be defined by
$$\widetilde{\delta}(L_1, L_2)=\|\rho_{L_1}-\rho_{L_2}\|_{\infty}=\sup_{u\in \sphere} |\rho_{L_1}(u)-\rho_{L_2}(u)|,$$
for $L_1, L_2\in \cS^n$.  Consequently, we can define convergence in $\cS^n$ by $\lim_{j\rightarrow \infty} \widetilde{\delta}(L_j, L)=0$ for $L,L_1, L_2,\ldots\in \cS^n$.  Clearly, $\cK_o^n\subset \cS_{c+}^n$.  It follows directly from the relations between the metrics $\delta$ and $\widetilde{\delta}$ in \cite[Lemma~2.3.2, (2.3.15) and (2.3.16)]{Gro} that if $K, K_1, K_2,\ldots \in \cK_o^n$, then $K_i\rightarrow K$ in the Hausdorff metric if and only if $K_i\rightarrow K$ in the radial metric.

If $K\in\mathcal{K}_{o}^n$, the {\em polar body} $K^*$ of $K$ is defined by
$$K^*=\{x\in\mathbb{R}^n: \langle x, y\rangle\leq1 \ \text{for}\  y\in K \}.$$
Then $(K^*)^*=K$ and (see \cite[(1.52), p.~57]{Sch})
\begin{eqnarray} \label{bi-polar--1}
\rho_K(x) h_{K^*}(x)=h_K (x) \rho_{K^*}(x)=1\ \ \ \mathrm{for~} x\in \R^n\setminus\{o\}.
\end{eqnarray}

One can define convex bodies associated to radial functions of star bodies. In general, if $\Omega\subset S^{n-1}$ is a closed set not contained in any closed hemisphere of $S^{n-1}$, and $f\in C^+(\Omega)$, define $\langle f\rangle\in\mathcal{K}_{o}^n$, the {\em convex hull} of $f$, by
\begin{eqnarray*}
\langle f\rangle =\conv\{f(u)u: u\in\Omega\}.
\end{eqnarray*}  The properties of $\langle f\rangle$ are similar to those of the Aleksandrov body. In particular, taking $\Omega=S^{n-1}$, we have $\langle \rho_K \rangle=K$ for each $K\in \cK_o^n$. It can be checked (see  \cite[Lemma 2.8]{LYZActa}) that
\begin{equation}\label{relation}
[f]^*=\langle 1/f \rangle.
\end{equation}

Throughout the paper, we will need certain classes of functions $\varphi: (0, \infty)\rightarrow (0, \infty)$. Let
\begin{eqnarray*}
\mathcal{I}  &=& \{\varphi  \ \mbox{is continuous and strictly increasing with}\ \varphi(1)=1, \varphi(0)=0, \ \mbox{and}\ \varphi(\infty)=\infty\}, \\ \mathcal{D}  &=& \{\varphi   \ \mbox{is continuous and strictly decreasing with} \ \varphi(1)=1, \varphi(0)=\infty, \ \mbox{and}\ \varphi(\infty)=0\},
\end{eqnarray*}
where $\varphi(0)$ and $\varphi(\infty)$ are considered as limits, $\varphi(0)=\lim_{t\rightarrow 0^+} \varphi(t)$ and $\varphi(\infty)=\lim_{t\rightarrow \infty} \varphi(t)$. Note that the values of $\varphi$ at $t=0, 1, \infty$ are chosen for technical reasons; results may still hold for other values of $\varphi$ at  $t=0, 1, \infty$.

For $a\in \R\cup\{-\infty\}$, we also require the following class of functions $\varphi:(0,\infty)\to (a,\infty)$:
$${\mathcal{J}}_a=\{\mbox{$\varphi$ is continuous and strictly monotonic,   $\inf_{t>0}\varphi(t)=a$, and  $\sup_{t>0}\varphi(t)=\infty$}\}.$$
Note that the log function belongs to ${\mathcal{J}}_{-\infty}$ and $\mathcal{I}\cup \mathcal{D}\subset \mathcal{J}_0$.

Let $f_0\in C^+(\sphere)$, let $g\in C(\sphere)$, and let $\varphi\in {\mathcal{J}}_a$ for some $a\in \R\cup\{-\infty\}$. Then $\varphi^{-1}: (a,\infty)\to (0,\infty)$, and since $S^{n-1}$ is compact, we have $0<c\le f_0\le C$ for some $0<c\le C$.  It is then easy to check that for $\ee\in \R$ close to $0$, one can define $f_{\ee}=f_{\ee}(f_0,g,\varphi)\in C^+(\sphere)$ by
\begin{eqnarray}\label{genplus} f_{\ee}(u)= {\varphi}^{-1}\left(\varphi(f_0(u))+\ee g(u)\right).
\end{eqnarray}
Note that we can apply (\ref{genplus}) when $f_0=h_K$ for some $K\in \cK^n_o$ or when $f_0=\rho_K$ for some $K\in \cS^n_{c+}$. Sometimes we will use this definition when $S^{n-1}$ is replaced by a closed set $\Omega\subset  S^{n-1}$ not contained in any closed hemisphere of $S^{n-1}$.

The {\em left derivative} and {\em right derivative} of a real-valued function $f$ are denoted by $f'_l$ and $f'_r$, respectively.  Whenever we use this notation, we assume that the one-sided derivative exists.

\subsection{Orlicz linear combination}
Let $K, L\in \cK_o^n$. For $\ee>0$, and either $\varphi_1,\varphi_2\in \mathcal{I}$ or $\varphi_1,\varphi_2\in \mathcal{D}$, define $h_{\ee}\in C^+(\sphere)$ (implicitly and uniquely) by
\begin{equation}\label{Orlicz-addition--1-1}
\varphi_1\left(\frac{h_K(u)}{h_{\ee}(u)}\right)+\ee\varphi_2
\left(\frac{h_L(u)}{h_{\ee}(u)}\right)=1 \ \ \ \ \ \mbox{for~}u\in \sphere.
\end{equation}
Note that $h_{\ee}=h_{\ee}(K,L,\varphi_1, \varphi_2)$ may not be a support function of a convex body unless $\varphi_1, \varphi_2\in\mathcal{I}$ are convex, in which case $h_{\ee}=h_{K+_{\varphi,\ee} L}$, where $K+_{\varphi,\ee} L$ is an {\em Orlicz linear combination of $K$ and $L$} (see \cite[p.~463]{GHW2014}).  However, the Aleksandrov body $[h_{\ee}]$ of $h_{\ee}$ belongs to $\cK_o^n$.

An alternative approach to forming Orlicz linear combinations is as follows.  Let $K\in \cK_o^n$, let $g\in C(\sphere)$, let $\varphi\in {\mathcal{J}}_a$ for some $a\in\R\cup\{-\infty\}$, and let $\widehat{h}_{\ee}$ be defined by (\ref{genplus}) with $f_0=h_K$.  This approach goes back to Aleksandrov \cite{Aleks} in the case when $\varphi(t)=t$. Again, the Aleksandrov body $[\widehat{h}_{\ee}]$ of $\widehat{h}_{\ee}$ belongs to $\cK_o^n$.  When $g=\varphi\circ h_L$ and $\varphi\in\mathcal{I}\subset {\mathcal{J}}_0$ is convex, $[\widehat{h}_{\ee}]= K\widehat{+}_{\varphi}\,\ee\cdot L$, as defined in \cite[(10.4), p.~471]{GHW2014}.

Suppose that $K, L\in \cK_o^n$, that  $\varphi\in\mathcal{I}$ is convex, and that $K+_{\varphi,\ee} L$ is defined by (\ref{Orlicz-addition--1-1}) with $\varphi_1=\varphi_2=\varphi$.  Then both $K+_{\varphi,\ee} L$ and $K\widehat{+}_{\varphi}\,\ee\cdot L$ belong to $\cK_o^n$ and coincide when $\varphi(t)=t^p$ for some $p\ge 1$, but they differ in general (to see this, compare the corresponding different variational formulas given by \cite[(8.11) and (8.12), p.~466]{GHW2014} and \cite[p.~471]{GHW2014}).

It is known (see \cite[Lemma~8.2]{GHW2014}, \cite[p.~18]{HongYeZhang-2017}, and \cite[Lemma~3.2]{XJL}) that $h_{\ee}\rightarrow h_K$ and $\widehat{h}_{\ee}\rightarrow h_K$ uniformly on $\sphere$ as $\ee \rightarrow 0$ and hence, by \cite[Lemma~7.5.2]{Sch}, both $[h_{\ee}]$ and $[\widehat{h}_{\ee}]$ converge to $K\in\cK_o^n$ as $\ee \rightarrow 0$.  Part (ii) of the following lemma is proved in \cite[(5.38)]{HongYeZhang-2017} for the case when $\varphi\in{\mathcal{I}}\cup{\mathcal{D}}$, but the same proof applies to the more general result stated.

\bl \label{unif-conv-two-functions} Let $K, L\in\mathcal{K}_o^n$.

\noindent
{\rm{(i)}} {\rm (\cite[Lemma~8.4]{GHW2014}, \cite[Lemma~5.2]{XJL}.)} If $\varphi_1,\varphi_2\in \mathcal{I}$ and $(\varphi_1)'_l(1)>0$, then
\begin{eqnarray}
\lim_{\ee\rightarrow
0^+}\frac{h_\ee(u)-h_K(u)}{\ee}
&=&\frac{h_K(u)}{(\varphi_1)'_l(1)}\,\varphi_2\!\left(\frac{h_L(u)}
{h_K(u)}\right) \label{convergence of f-e}
\end{eqnarray}
uniformly on $\sphere$.  For $\varphi_1,\varphi_2\in \mathcal{D}$,  \eqref{convergence of f-e} holds when $(\varphi_1)'_r(1)<0$, with $(\varphi_1)'_l(1)$ replaced by $(\varphi_1)'_r(1)$.

\smallskip

\noindent
{\rm{(ii)}} {\rm (cf.~\cite[(5.38)]{HongYeZhang-2017}.)} Let $a\in \R\cup\{-\infty\}$. If $\varphi\in {\mathcal{J}}_a$ and $\varphi'$ is continuous and nonzero on $(0, \infty)$, then for $g\in C(\sphere)$,
$$
\lim_{\ee\rightarrow 0} \frac{\widehat{h}_{\ee}(u)-h_{K}(u)}{\ee}= \frac{g(u)}{\varphi'\left(h_{K}(u)\right)}
$$
uniformly on $\sphere$, where $\widehat{h}_{\ee}$ is defined by \eqref{genplus} with $f_0=h_K$.
\el

Analogous results hold for radial functions of star bodies.
Let $K, L\in \cS_{c+}^n$. For $\ee>0$, and  either $\varphi_1,\varphi_2\in \mathcal{I}$ or $\varphi_1,\varphi_2\in \mathcal{D}$,  define $\rho_{\ee}\in C^+(\sphere)$ (implicitly and uniquely) by
\begin{equation}\label{rlc} \varphi_1\left(\frac{\rho_K(u)}{\rho_{\ee}(u)}\right)
+\ee\varphi_2\left(\frac{\rho_L(u)}{\rho_{\ee}(u)}\right)=1 \ \ \ \ \ \mbox{for~}u\in \sphere.
\end{equation}
Then $\rho_{\ee}$ is the radial function of the {\em radial Orlicz linear combination $K\widetilde{+}_{\varphi,\ee}L$ of $K$ and $L$} (see \cite[(22), p.~822]{ghwy15}).

Let $a\in \R\cup\{-\infty\}$. For $\varphi\in {\mathcal{J}}_a$,  $g\in C(\sphere)$, and $\ee\in \R$ close to $0$, define $\widehat{\rho}_{\ee}\in C^+(\sphere)$ by (\ref{genplus}) with $f_0=\rho_K$. The definitions of both $\rho_{\ee}$ and $\widehat{\rho}_{\ee}$ can be extended to $K, L\in \cS_+^n$ (or even $L\in \cS^n$), but we shall mainly work with star bodies and hence focus on $\cS_{c+}^n$.  It is known (see \cite[Lemma~5.1]{ghwy15}, \cite[p.~18]{HongYeZhang-2017} (with $h$ replaced by $\rho$), and \cite[Lemma~3.5]{Zhub2014}) that $\rho_{\ee}\rightarrow \rho_K$ and $\widehat{\rho}_{\ee}\rightarrow \rho_K$ uniformly on $\sphere$ as $\ee \rightarrow 0$.  From this and the equivalence between convergence in the Hausdorff and radial metrics for sets in $\cK_o^n$, one sees that, for each $K\in \cK_o^n$, both $\langle \rho_{\ee}\rangle$ and $\langle \widehat{\rho}_{\ee}\rangle$ converge to $K$ in either metric.

\bl \label{unif-conv-two-functions-2} Let $K, L\in\mathcal{S}_{c+}^n$.

\noindent
{\rm{(i)}} {\rm (\cite[Lemma~5.3]{ghwy15}; see also \cite[Lemma~4.1]{Zhub2014}.)} If $\varphi_1,\varphi_2\in \mathcal{I}$ and $(\varphi_1)'_l(1)>0$, then
\begin{eqnarray}
\lim_{\ee\rightarrow
0^+}\frac{\rho_\ee(u)-\rho_K(u)}{\ee}
&=&\frac{\rho_K(u)}{(\varphi_1)'_l(1)} \varphi_2\!\left(\frac{\rho_L(u)}{\rho_K(u)}\right)
\label{newlimits}
\end{eqnarray}
uniformly on $\sphere$.  For $\varphi_1,\varphi_2\in \mathcal{D}$,  \eqref{newlimits} holds when $(\varphi_1)'_r(1)>0$, with $(\varphi_1)'_l(1)$ replaced by $(\varphi_1)'_r(1)$.

\smallskip

\noindent
{\rm{(ii)}} {\rm (cf.~\cite[(5.38)]{HongYeZhang-2017}.)} Let $a\in \R\cup\{-\infty\}$. If $\varphi\in {\mathcal{J}}_a$ and $\varphi'$ is continuous and nonzero on $(0, \infty)$, then for $g\in C(\sphere)$,
\begin{equation}
\lim_{\ee\rightarrow 0} \frac{\widehat{\rho}_{\ee} (u)-\rho_{K}(u)}{\ee}= \frac{g(u)}{\varphi'\left(\rho_{K}(u)\right)} \label{conv-hat-sum}
\end{equation}
uniformly on $\sphere$, where $\widehat{\rho}_{\ee}$ is defined by \eqref{genplus} with $f_0=\rho_K$.
\el

\subsection{Maps related to a convex body}\label{Maps}
We recall some terminology and facts from \cite[Section~2.2]{LYZActa}. Let $K\in\cK_o^n$. Define
$$\pmb{\nu}_K(E)=\{u\in S^{n-1}: x\in H(K,u)\text{ for some }x\in E\}$$
for $E\subset \partial K$,
$$\pmb{x}_K(E)=\{x\in\partial K: x\in H(K,u)\text{ for some }u\in E\}$$
for $E\subset \sphere$, and
$$\pmb{\alpha}_K(E)=\pmb{\nu}_K(\{\rho_K(u)u\in\partial K: u\in E\})$$
for $E \subset \sphere$.  Let $\sigma_K\subset\partial K$, $\eta_K\subset S^{n-1}$, and $\omega_K\subset S^{n-1}$ be the sets where $\pmb{\nu}_K(\{x\})$, $\pmb{x}_K(\{u\})$, and $\pmb{\alpha}_K(\{u\})$, respectively, have two or more elements. Then
\begin{equation}\label{zerosets}
\cH^{n-1}(\sigma_K)=\cH^{n-1}(\eta_K)=\cH^{n-1}(\omega_K)=0.
\end{equation}
Elements of $\sphere\setminus\eta_K$ are called {\em regular normal vectors} of $K$ and $\reg K=\partial K\setminus\sigma_K$ is the set of {\em regular boundary points} of $K$. We write $\nu_K(x)$, $x_K(u)$, and $\alpha_K(u)$ instead of $\pmb{\nu}_K(\{x\})$, $\pmb{x}_K(\{u\})$, and $\pmb{\alpha}_K(\{u\})$ if $x\in \reg K$, $u\in\sphere\setminus \eta_K$, and $u\in \sphere \setminus \omega_K$, respectively.

Next, we define
$$
\pmb{\alpha}^*_K(E)=\{x/|x|: x\in \partial K \cap H(K,u)\text{ for some }u\in E\}=\{x/|x|: x\in \pmb{x}_K(E)\}
$$
for $E\subset \sphere$.  In particular, one can define a continuous map $\alpha^*_K(u)= x_K(u)/|x_K(u)|$ for $u\in S^{n-1}\setminus\eta_K$. For $E \subset S^{n-1}$, we have $\pmb{\alpha}^*_K(E)=\pmb{\alpha}_{K^*}(E)$. Moreover, for $\cH^{n-1}$-almost all $u\in S^{n-1}$,
\begin{equation}\label{map-reverse}
\alpha^*_K(u)=\alpha_{K^*}(u)
\end{equation}
and
\begin{equation} \label{relation-11-27}
u\in \pmb{\alpha}_K^*(E)\ \text{if and only if}\  \alpha_K(u)\in E.
\end{equation}

\section{General dual volumes and curvature measures}\label{section 2.1}
Let $\wp:(0, \infty)\times \sphere\to (0,\infty)$ be continuous.  (Remark~\ref{remfeb25} addresses the possibility of allowing $\wp:(0, \infty)\times \sphere\to \R$.)  For $K\in \cS^n_+$, define the {\em general dual volume} $\dveV(K)$ of $K$ by
\begin{equation}\label{def-H-volume-12-07}
\dveV(K)=\int_{\sphere} \wp(\rho_K(u), u)\,du.
\end{equation}
Our approach will be to obtain results for this rather general set function that yield geometrically interesting consequences for particular functions $\wp$.

Let $\phi:\mathbb{R}^n\setminus\{o\}\rightarrow(0, \infty)$ be a continuous function.  One special case of interest is when $\wp=\of$, where
\begin{equation}\label{Phidef}
\of(t, u)=\int_{t}^{\infty}\phi(ru)r^{n-1}\,dr
\end{equation}
for $t>0$ and $u\in \sphere$. Then we define $\eV(K)=\widetilde{V}_{\of}(K)$, so that
\begin{eqnarray}\label{general-quermass-11}
\eV(K)  = \int_{\sphere}\of(\rho_{K}(u),u)\,du=
\int_{\sphere}\int_{\rho_K(u)}^{\infty}\phi(ru)r^{n-1}\,dr\,du=
\int_{\mathbb{R}^n \setminus K} \phi(x)\,dx,
\end{eqnarray}
where the integral may be infinite.  Similarly, taking $\wp=\uf$, where
$$
\uf(t, u)=\int^{t}_0\phi(ru)r^{n-1}\,dr
$$
for $t>0$ and $u\in \sphere$, we define $\teV(K)=\widetilde{V}_{\uf}(K)$, whence
\begin{eqnarray}\label{general-quermass-22}
\teV(K) =\int_{\sphere}\uf(\rho_{K}(u),u)\,du= \int_{K}\phi(x)\,dx,
\end{eqnarray}
where again the integral may be infinite.  We refer to both $\teV(K)$ and $\eV(K)$ as a {\em general dual Orlicz volume} of $K\in \cS^n$.  Indeed, if $q\neq 0$ and $\phi(x)=(|q|/n)|x|^{q-n}$, then
$$
\widetilde{V}_{q}(K)=\frac{1}{n}\int _{\sphere}\rho_K(u)^q\,du=\begin{cases}
\eV(K),& {\text{if $q<0$,}}\\
\teV(K),& {\text{if $q>0$}},
\end{cases}
$$
is the {\em $q$th dual volume of $K$}; see \cite[p.~410]{Gar06}. In particular, when $q=n$, we have $\teV(K)=V_n(K),$ the volume of $K$. More generally, if $\phi(x)=(|q|/n)|x|^{q-n}\rho_Q(x/|x|)^{n-q}$, where $q\neq 0$ and $Q\in \cS^n$, then
\begin{equation}\label{qmixedv}
\widetilde{V}_{q}(K,Q)=\frac{1}{n}\int _{\sphere}\rho_K(u)^q\rho_Q(u)^{n-q}\,du=\begin{cases}
\eV(K),& {\text{if $q<0$,}}\\
\teV(K),& {\text{if $q>0$}},
\end{cases}
\end{equation}
is the {\em $q$th dual mixed volume of $K$ and $Q$}; see \cite[p.~410]{Gar06}.

Other special cases of $\dveV(K)$ of interest, the general Orlicz dual mixed volumes $\veV(K, L)$ and $\oveV(K,g)$, are given in (\ref{dua1}) and (\ref{dua2}).

Next, we introduce a new general dual Orlicz curvature measure.

\bd\label{general dual Orlicz curvature measure-11-27}
{\rm Let $K\in \cK_o^n$, let $\psi: (0, \infty)\rightarrow (0, \infty)$ be continuous, and let $\wp_t(t,u)=\partial \wp(t,u)/\partial t$ be such that $u\mapsto\wp_t(\rho_K(u),u)$ is integrable on $S^{n-1}$.  Define the finite {\em signed} Borel measure $\deV(K, \cdot)$ on $S^{n-1}$ by
\begin{equation}\label{gencdef}
\deV(K, E)=\frac{1}{n}\int_{\pmb{\alpha}^*_K(E)} \frac{\rho_{K}(u)\, \wp_t(\rho_K(u), u) }{\psi(h_{K}(\alpha_K(u)))}\,du
\end{equation}
for each Borel set $E\subset\sphere$.  If $\psi\equiv1$, we often write $\leV(K, \cdot)$ instead of $\deV(K, \cdot)$.}
\ed

To see that $\deV(K,\cdot)$ is indeed a finite signed Borel measure on $S^{n-1}$, note firstly that $\deV(K,\emptyset)=0$.  Since $K\in \cK_o^n$ and $u\mapsto\wp_t(\rho_K(u),u)$ is integrable, $\deV(K, \cdot)$ is finite.   Let  $E_i \subset S^{n-1}$, $i\in \N$, be disjoint Borel sets. By \cite[Lemmas 2.3 and 2.4]{LYZActa}, $\pmb{\alpha}^*_K(\cup_iE_i)=\cup_i\pmb{\alpha}^*_K(E_i)$ and the intersection of any two of these sets has $\mathcal{H}^{n-1}$-measure zero.  The dominated convergence theorem then implies that
\begin{eqnarray*}
\deV(K,\cup_{i}E_i) &=& \frac{1}{n}\int_{\cup_{i}\pmb{\alpha}^*_K(E_i)}\frac{\rho_{K}(u)\, \wp_t(\rho_K(u), u)}{\psi(h_{K}(\alpha_K(u)))}\,du\\ &=& \frac{1}{n}\sum_{i=1}^\infty\int_{\pmb{\alpha}^*_K(E_i)}\frac{\rho_{K}(u)\, \wp_t(\rho_K(u), u)}{\psi(h_{K}(\alpha_K(u)))}\,du =\sum_{i=1}^\infty\deV(K,E_i),
\end{eqnarray*}
so $\deV(K,\cdot)$ is countably additive.

Integrals with respect to $\deV(K,\cdot)$ can be calculated as follows. For any bounded Borel function $g: S^{n-1}\rightarrow \R$, we have
\begin{eqnarray}
\int_{\sphere} g(u)\, d\deV(K, u)&=&\frac{1}{n}\int_{\sphere}g(\alpha_K(u))  \frac{\rho_{K}(u)\, \wp_t(\rho_K(u), u) }{ \psi(h_{K}(\alpha_K(u)))}\,du\label{new measue-11-27}\\
&=&\frac{1}{n}\int_{\partial K}g(\nu_{K}(x))\frac{ \langle x, \nu_{K}(x)\rangle}{\psi (\langle x, \nu_{K}(x)\rangle)}\,
|x|^{1-n}\, \wp_t(|x|, \bar{x})\,\,dx,\label{form-11-28}
\end{eqnarray}
where $\bar{x}=x/|x|$.  (Recall our convention that integration on $\partial K$ is denoted by $dx=d\mathcal{H}^{n-1}(x)$.) Relation (\ref{new measue-11-27}) follows immediately from \eqref{relation-11-27}, and (\ref{form-11-28}) follows from the fact that the bi-Lipschitz radial map $r:\partial K\to \sphere$, given by $r(x)=x/|x|$, has Jacobian $Jr(x)=\langle x,\nu_K(x)\rangle |x|^{-n}$ for all regular boundary points, and hence for $\mathcal{H}^{n-1}$-almost all $x\in\partial K$.

If $K$ is strictly convex, then the gradient $\nabla h_K(u)$ of $h_K$ at $u\in\sphere$ equals the unique $x_K(u)\in \partial K$ with outer unit normal vector $u$, and $\nabla h_K(\nu_K(x))=x$ for ${\mathcal{H}}^{n-1}$-almost all $x\in \partial K$. Using this and \cite[Lemma 2.10]{LYZ-Lp}, \eqref{form-11-28} yields
\begin{eqnarray}\label{mar1}
\lefteqn{
\int_{\sphere} g(u)\, d\deV(K, u)}\nonumber\\
&=&
\frac{1}{n}\int_{\sphere}g(u)\,\frac{h_K(u)}
{\psi (h_K(u))}\,|\nabla h_K(u)|^{1-n}\,
\wp_t\left(|\nabla h_K(u)|, \frac{\nabla h_K(u)}{|\nabla h_K(u)|}\right)\,dS(K,u).
\end{eqnarray}

The following result could be proved in the same way as \cite[Lemma~5.5]{LYZ-Lp}, using Weil's Approximation Lemma. Here we provide an argument which avoids the use of this lemma.

\begin{theorem}\label{valprop}
Let $K\in \cK_o^n$, and let $G,\psi$ be as in Definition~\ref{general dual Orlicz curvature measure-11-27}. Then the measure-valued map $K\mapsto \deV(K,\cdot)$ is a valuation on $\cK_o^n$.
\end{theorem}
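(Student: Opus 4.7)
The plan is to verify the valuation identity
$$\deV(K\cup L,E)+\deV(K\cap L,E)=\deV(K,E)+\deV(L,E)$$
for all $K,L\in\cK_o^n$ with $K\cup L\in\cK_o^n$ and every Borel set $E\subset\sphere$. My main tool is the integral representation \eqref{form-11-28}, which for any $M\in\cK_o^n$ reads $\deV(M,E)=\tfrac{1}{n}\int_{\partial M}\Phi_E(x,M)\,dx$ with
$$\Phi_E(x,M):=1_E(\nu_M(x))\,\frac{\langle x,\nu_M(x)\rangle}{\psi(\langle x,\nu_M(x)\rangle)}\,|x|^{1-n}\wp_t(|x|,\bar x).$$
The crucial feature is that at each regular boundary point $x$ of $M$, the integrand depends on $M$ \emph{only} through the outer unit normal $\nu_M(x)$; the dependence on $x$, $|x|$ and $\bar x$ is intrinsic.

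I would then partition $\partial K$, up to an $\mathcal{H}^{n-1}$-null set, as $(\partial K\cap\inte L)\cup(\partial K\setminus L)\cup(\partial K\cap\partial L)$, and similarly for $\partial L$. For $\mathcal{H}^{n-1}$-a.e.\ $x\in\partial K\cap\inte L$, a neighborhood of $x$ in $K\cap L$ coincides with one in $K$, so $x$ is a regular boundary point of $K\cap L$ with $\nu_{K\cap L}(x)=\nu_K(x)$, while $x\in\inte (K\cup L)$. For $\mathcal{H}^{n-1}$-a.e.\ $x\in\partial K\setminus L$, a neighborhood of $x$ in $K\cup L$ coincides with one in $K$, so $\nu_{K\cup L}(x)=\nu_K(x)$, while $x\notin K\cap L$. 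The symmetric statements with $K$ and $L$ interchanged then yield, up to $\mathcal{H}^{n-1}$-null sets, the partitions
$$\partial(K\cap L)=(\partial K\cap\inte L)\cup(\partial L\cap\inte K)\cup(\partial K\cap\partial L),$$
$$\partial(K\cup L)=(\partial K\setminus L)\cup(\partial L\setminus K)\cup(\partial K\cap\partial L).$$

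The delicate piece is $\partial K\cap\partial L$, which can carry positive $\mathcal{H}^{n-1}$-measure, and this is where convexity of $K\cup L$ is decisive. At a point $x\in\partial K\cap\partial L$ that is a regular boundary point of both $K$ and $L$, the outer unit normals $\nu_K(x)$ and $\nu_L(x)$ must coincide---otherwise $K\cup L$ would fail to be convex in a neighborhood of $x$---and this common normal then also equals $\nu_{K\cup L}(x)$ and $\nu_{K\cap L}(x)$. Since by \eqref{zerosets} the non-regular parts are $\mathcal{H}^{n-1}$-negligible, one obtains $\Phi_E(x,K)=\Phi_E(x,L)=\Phi_E(x,K\cup L)=\Phi_E(x,K\cap L)$ for $\mathcal{H}^{n-1}$-a.e.\ $x\in\partial K\cap\partial L$.

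Summing the six integrals over the pieces of the two partitions of $\partial(K\cap L)$ and $\partial(K\cup L)$, and matching them term by term against the six integrals produced by the analogous decompositions of $\partial K$ and $\partial L$, each pair of integrands agrees almost everywhere on the relevant piece, so the valuation identity drops out. The main obstacle is precisely the analysis on $\partial K\cap\partial L$, which is handled by the convexity argument above; no polytopal approximation and no appeal to Weil's Approximation Lemma is required.
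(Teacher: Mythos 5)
Your proof is correct and is essentially the argument the paper gives: both decompose $\partial(K\cap L)$, $\partial(K\cup L)$, $\partial K$, $\partial L$ into the same three pieces, use the representation \eqref{form-11-28} (whose integrand depends on the body only through the outer normal), match normals on each piece at almost every regular point --- including on $\partial K\cap\partial L$ via uniqueness of supporting hyperplanes --- and recombine, with no appeal to Weil's Approximation Lemma.
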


\begin{proof}
Let $K,L\in \cK_o^n$ be such that $K\cup L\in \cK_o^n$.  It suffices to show that for any bounded Borel function $g: S^{n-1}\rightarrow \R$, we have
\begin{equation}\label{vprop}
I(K\cap L)+I(K\cup L)=I(K)+I(L),
\end{equation}
where $I(M)=\int_{\sphere} g(u)\, d\deV(M, u)$ for $M\in \cK_o^n$.
The sets $K\cap L$, $K\cup L$, $K$, and $L$ can each be partitioned into three disjoint sets, as follows:
\begin{align}
\partial(K\cap L)&=(\partial K\cap \inte L)\cup(\partial L\cap \inte K)\cup (\partial K\cap\partial L),\label{eqa1}\\
\partial(K\cup L)&=(\partial K\setminus L )\cup (\partial L\setminus K )\cup(\partial K\cap\partial L),\label{eqa2}\\
\partial K&=(\partial K\cap \inte L)\cup(\partial K\setminus L)\cup(\partial K\cap\partial L),\label{eqb1}\\
\partial L&=(\partial L\cap \inte K)\cup (\partial L\setminus K )\cup(\partial K\cap\partial L).\label{eqb2}
\end{align}
Let $\bar{x}=x/|x|$.  For $\mathcal{H}^{n-1}$-almost all $x\in\partial(K\cap L)$, we have
\begin{align}
x\in \partial K\cap \inte L \quad &\Rightarrow \quad \nu_{K\cap L}(x)=\nu_K(x){\text{ and }}\rho_{K\cap L}(\bar x)=\rho_K(\bar x),\label{feb181}\\
x\in \partial L\cap \inte K \quad&\Rightarrow \quad \nu_{K\cap L}(x)=\nu_L(x){\text{ and }}\rho_{K\cap L}(\bar x)=\rho_L(\bar x),\label{feb182}\\
x\in \partial K\cap \partial L\quad&\Rightarrow \quad\nu_{K\cap L}(x)=\nu_K(x)=\nu_L(x){\text{ and }}\rho_{K\cap L}(\bar x)=\rho_K(\bar x)=\rho_L(\bar x),\label{feb183}
\end{align}
where the first set of equations in (\ref{feb183}) hold for $x\in\reg(K\cap L)\cap \reg K\cap\reg L$ since $K\cap L\subset K,L$.  Also,
for $\mathcal{H}^{n-1}$-almost all $x\in\partial(K\cup L)$, we have
\begin{align}
x\in \partial K\setminus L \quad &\Rightarrow \quad\nu_{K\cup L}(x)=\nu_K(x){\text{ and }}\rho_{K\cup L}(\bar x)=\rho_K(\bar x),\label{feb184}\\
x\in \partial L\setminus K\quad &\Rightarrow \quad\nu_{K\cup L}(x)=\nu_L(x){\text{ and }}\rho_{K\cup L}(\bar x)=\rho_L(\bar x),\label{feb185}\\
x\in \partial K\cap \partial L\quad &\Rightarrow \quad\nu_{K\cup L}(x)=\nu_K(x)=\nu_L(x){\text{ and }}\rho_{K\cup L}(\bar x)=\rho_K(\bar x)=\rho_L(\bar x),\label{feb186}
\end{align}
where the first set of equations in (\ref{feb186}) hold for $x\in\reg(K\cup L)\cap \reg K\cap\reg L$ since $K,L\subset  K\cup L$.  Now (\ref{vprop}) follows easily from \eqref{form-11-28}, by first decomposing the integrations over $\partial (K\cap L)$ and $\partial (K\cup L)$ into six contributions via \eqref{eqa1} and \eqref{eqa2}, using (\ref{feb181}--\ref{feb186}), and then recombining these contributions via \eqref{eqb1} and \eqref{eqb2}.
\end{proof}

Some particular cases of (\ref{gencdef}) are worthy of mention.  Firstly, with $\wp=\uf$ and general $\psi$, we prefer to write $\ccV(K,E)$ instead of
 $\widetilde{C}_{\uf,\psi}(K,E)$.  Then we have
\begin{equation}\label{general dual Orlicz curvature measure-11-25}
\ccV(K,E)=\frac{1}{n}\int_{\pmb{\alpha}^*_K(E)}\frac{\phi(\rho_K(u)u)\rho_K(u)^n}
{\psi(h_{K}(\alpha_K(u)))}\,du
\end{equation}
and by specializing \eqref{new measue-11-27} and \eqref{form-11-28} we get
\begin{eqnarray*}
\int_{\sphere}g(u)\, d\ccV(K,u) &=&
\frac{1}{n}\int_{\sphere}g(\nu_K(\rho_K(u)u))\frac{\phi(\rho_K(u)u)\rho_K(u)^n}
{\psi(h_{K}(\alpha_K(u)))}\,du\\
&=&\frac{1}{n}\int_{\partial K}g(\nu_K(x))\frac{\langle x,\nu_K(x)\rangle}{\psi(\langle x,\nu_K(x)\rangle)}\phi(x)\, dx
\end{eqnarray*}
for any bounded Borel function $g:\sphere\to \R$. Here we used
\begin{equation}\label{GG}
\wp_t(\rho_K(u), u)=\phi(\rho_K(u)u)\rho_K(u)^{n-1}.
\end{equation}

If we also choose $\psi=1$ and write  $\cV(K,E)$ instead of $\widetilde{C}_{\uf}
(K,E)$, we obtain
$$
\cV(K,E)=
\frac{1}{n}\int_{\pmb{\alpha}^*_K(E)}\phi(\rho_K(u)u)\rho_K(u)^n\,du,
$$
the general dual Orlicz curvature measure introduced in \cite{XY2017-1}, and in particular we see that
\begin{eqnarray}
\int_{S^{n-1}}g(u)\,d\cV(K,u)&=&
\frac{1}{n}\int_{S^{n-1}}g(\alpha_K(u))
\phi(\rho_K(u)u)\rho_K(u)^n\,du\label{integral-general curvature measure 1}\\
&=&
\frac{1}{n}\int_{\partial  K} g(\nu_K(x))\,\phi(x)\,\langle x,\nu_K(x)\rangle\,dx,\nonumber
\end{eqnarray}
as in \cite[Lemma~3.1]{XY2017-1}.

Note that when $\wp=\of$ is given by (\ref{Phidef}), we have $\dveV(K)=\eV(K)$ as in (\ref{general-quermass-11}), in which case $\wp_t(\rho_K(u), u)=-\phi(\rho_K(u)u)\rho_K(u)^{n-1}$ and hence
$\widetilde{C}_{\overline{\Phi},\psi}(K,E)=-\ccV(K,E)$.
Comparing (\ref{gencdef}) and (\ref{new measue-11-27}), and using (\ref{GG}), we see that
\begin{numcases}
{\int_{S^{n-1}}\frac{g(u)}{\psi(h_K(u))}\,d\cV(K,u)=}
-\int_{\sphere} g(u)\, d\widetilde{C}_{\overline{\Phi},\psi}(K, u)\label{meascon}\\
\int_{\sphere} g(u)\, d\widetilde{C}_{\underline{\Phi},\psi}(K, u).\label{meascon2}
\end{numcases}
Taking
$\phi(x)=|x|^{q-n}\rho_Q(x/|x|)^{n-q}$, for some $Q\in \cS_{c+}^n$ and $q\in \R$, and $\psi(t)=t^p$, $p\in \R$, from (\ref{general dual Orlicz curvature measure-11-25}) we get $\ccV(K,E)=\widetilde{C}_{p,q}(K,Q,E)$, where
$$
\widetilde{C}_{p,q}(K,Q,E)=
\frac{1}{n}\int_{\pmb{\alpha}^*_K(E)}
h_{K}(\alpha_K(u))^{-p}\,\rho_K(u)^q\,\rho_Q(u)^{n-q}\,du
$$
is the {\em{$(p,q)$-dual curvature measure of $K$ relative to $Q$}} introduced in \cite[Definition~4.2]{LYZ-Lp}.  The formula \cite[(5.1), p.~114]{LYZ-Lp} or the preceding discussion show that for any bounded Borel function $g: S^{n-1}\rightarrow \R$, we have
\begin{equation}\label{LYZint}
\int_{\sphere} g(u)\, d\widetilde{C}_{p,q}(K,Q,u)=\frac{1}{n}\int_{\sphere}g(\alpha_K(u))\,
h_{K}(\alpha_K(u))^{-p}\,\rho_{K}(u)^q\,\rho_Q(u)^{n-q}\,du.
\end{equation}

\section{General variational formulas for radial Orlicz linear combinations}\label{section4}		
Our main result in this section is the following variational formula for $\dveV$, where $\wp_t(t,u)=\partial \wp(t,u)/\partial t$.

\bt\label{mixed-theorem-12-09}
Let $\wp$ and $\wp_t$ be continuous on $(0, \infty)\times \sphere$ and let $K, L\in \cS_{c+}^n$.

\noindent
{\rm{(i)}} If $\varphi_1,\varphi_2\in \mathcal{I}$ and $(\varphi_1)'_l(1)>0$, then
\begin{equation}\label{variation--1--1--1209}
\lim_{\ee\rightarrow0^+}\frac{\dveV(K_\ee)-\dveV(K)}
{\ee}=\frac{1}{(\varphi_1)'_l(1)}\int_{\sphere} \varphi_{2}\!\left(\frac{\rho_{L}(u)}{\rho_{K}(u)}\right)\rho_K(u)\,
\wp_t(\rho_K(u),u) \,du,
\end{equation}
where $K_{\ee}=K\widetilde{+}_{\varphi,\ee}L$ has radial function $\rho_{\ee}$ given by \eqref{rlc}.  For $\varphi_1,\varphi_2\in \mathcal{D}$,  \eqref{variation--1--1--1209} holds when $(\varphi_1)'_r(1)<0$, with $(\varphi_1)'_l(1)$ replaced by $(\varphi_1)'_r(1)$.

\smallskip

\noindent
{\rm{(ii)}} Let $a\in \R\cup\{-\infty\}$. If $\varphi\in {\mathcal{J}}_a$ and $\varphi'$ is continuous and nonzero on $(0, \infty)$, then for $g\in C(\sphere)$,
$$\lim_{\ee\rightarrow0}\frac{\dveV(\widehat{K}_{\ee})-\dveV(K)}
{\ee}=\int_{\sphere}\frac{g(u)\,\wp_t(\rho_K(u),u) }{\varphi'\left(\rho_{K}(u)\right)}\,du,
$$
where $\widehat{K}_{\ee}$ has radial function $\widehat{\rho}_{\ee}$ given by \eqref{genplus} with $f_0=\rho_K$.
\et

\begin{proof}
\noindent{\rm{(i)}} By (\ref{def-H-volume-12-07}),
\begin{equation}\label{limi}
\lim_{\ee\rightarrow0^+}\frac{\dveV(K_\ee)-\dveV(K)}{\ee}
=\lim_{\ee\rightarrow0^+}\int_{\sphere}\frac{\wp(\rho_{\ee}(u),u)-
\wp(\rho_K(u),u)}{\ee}\,du.
\end{equation}
Also, by (\ref{newlimits}),
\begin{eqnarray*}
\lim_{\ee\rightarrow0^+}\frac{\wp(\rho_\ee(u),u)-
\wp(\rho_K(u),u)}{\ee}&=&
\wp_t(\rho_K(u),u)\lim_{\ee\rightarrow0^+}\frac{\rho_\ee(u)-
\rho_K(u)}{\ee}\\
&=&\frac{1}{(\varphi_1)'_l(1)} \varphi_{2}\!\left(\frac{\rho_{L}(u)}{\rho_{K}(u)}\right)\rho_K(u)\wp_t(\rho_K(u),u),
\end{eqnarray*}
where the previous limit is uniform on $\sphere$.  Therefore (\ref{variation--1--1--1209}) will follow if we show that the limit and integral in (\ref{limi}) can be interchanged.  To this end, assume that $\varphi_1,\varphi_2\in \mathcal{I}$ and $(\varphi_1)'_l(1)>0$; the proof when $\varphi_1,\varphi_2\in \mathcal{D}$ and $(\varphi_1)'_r(1)<0$ is similar.  If $\rho_1(u)=\rho_{\ee}(u)\big|_{\ee=1}$, it is easy to see from \eqref{rlc} that $\rho_K\le \rho_{\ee}\le \rho_1$ on $S^{n-1}$ when $\varepsilon\in(0,1)$.  Since $\wp_t$ is continuous on $(0, \infty)\times \sphere$,
$$\sup\{|\wp_t(t,u)|: \rho_K(u)\le t\le \rho_1(u),~u\in \sphere\}=m_1<\infty.$$
By the mean value theorem and Lemma~\ref{unif-conv-two-functions-2}(i),
$$\left|\frac{\wp(\rho_{\ee}(u),u)-\wp(\rho_{K}(u),u)}{\ee}\right|\le m_2$$
for $0<\ee<1$.  Thus we may apply the dominated convergence theorem in (\ref{limi}) to complete the proof.

\noindent{\rm{(ii)}}  The argument is very similar to that for (i) above.  Since
\begin{equation}\label{limi2}
\lim_{\ee\rightarrow0}\frac{\dveV(\widehat{K}_\ee)-\dveV(K)}{\ee}
=\lim_{\ee\rightarrow0}\int_{\sphere}\frac{\wp(\widehat{\rho}_{\ee}(u),u)-
\wp(\rho_K(u),u)}{\ee}\,du
\end{equation}
we can use (\ref{conv-hat-sum}) instead of (\ref{newlimits}) and need only justify interchanging the limit and integral in (\ref{limi2}). To see that this is valid, suppose that $\varphi \in {\mathcal{J}}_a$ is strictly increasing; the proof is similar when $\varphi$ is strictly decreasing. Then there exists $\ee_0>0$ such that for $\ee\in (-\ee_0, \ee_0)$ and $u\in \sphere$, we have
$$0<b_1(u)={\varphi}^{-1}\left(\varphi\left(\rho_{K}(u)\right)-\ee_0 m_3\right)\le \widehat{\rho}_{\ee}(u)\le {\varphi}^{-1}\left(\varphi\left(\rho_{K}(u)\right)+\ee_0 m_3\right)=b_2(u)<\infty,$$ where $m_3=\sup_{u\in \sphere} |g(u)|<\infty$ due to $g\in C(\sphere)$. Since $\wp_t$ is continuous on $(0, \infty)\times \sphere$,
$$\sup\{|\wp_t(t,u)|: b_1(u)\le t\le b_2(u),~u\in \sphere\}=m_4<\infty.$$
By the mean value theorem and Lemma~\ref{unif-conv-two-functions-2}(ii),
$$\left|\frac{\wp(\widehat{\rho}_{\ee}(u),u)-\wp(\rho_{K}(u),u)}{\ee}\right|\le m_5$$
for $0<\ee<\ee_0$.  Thus we may apply the dominated convergence theorem in (\ref{limi2}) to complete the proof.
\end{proof}

Recall that $\eV$ and $\teV$ are defined by (\ref{general-quermass-11}) and (\ref{general-quermass-22}), respectively.  Note that when $\wp=\of$ or $\uf$, $\wp_t(t,u)=\pm \phi(tu)t^{n-1}$ is continuous on $(0, \infty)\times \sphere$ because $\phi$ is assumed to be continuous.  The following result is then a direct consequence of the previous theorem.

\bc\label{mixed-theorem-1} Let $\phi:\mathbb{R}^n\setminus\{o\}\rightarrow(0, \infty)$ be a continuous function and let $K, L\in \cS_{c+}^n$.

\noindent
{\rm{(i)}} If $\varphi_1,\varphi_2\in \mathcal{I}$ and $(\varphi_1)'_l(1)>0$, then
\begin{numcases}
{\frac{1}{(\varphi_1)'_l(1)}\int_{\sphere}
\phi(\rho_{K}(u)u)\, \varphi_{2}\!\left(\frac{\rho_{L}(u)}{\rho_{K}(u)}\right) \rho_{K}(u)^n \,du=}
\lim_{\ee\rightarrow0^+}\frac{\eV(K)-\eV(\rho_\ee)}
{\ee}\label{variation--1--1--1}\\
\lim_{\ee\rightarrow0^+}\frac{\teV(\rho_\ee)-\teV(K)}
{\ee},\label{variation--1--1--2}
\end{numcases}
where $\rho_{\ee}$ is given by \eqref{rlc}, provided $\of$ (or $\uf$, respectively) is continuous. For $\varphi_1,\varphi_2\in \mathcal{D}$, \eqref{variation--1--1--1} and \eqref{variation--1--1--2} hold when $(\varphi_1)'_r(1)<0$, with $(\varphi_1)'_l(1)$ replaced by $(\varphi_1)'_r(1)$.

\smallskip

\noindent
{\rm{(ii)}} Let $a\in \R\cup\{-\infty\}$. If $\varphi\in {\mathcal{J}}_a$ and $\varphi'$ is continuous and nonzero on $(0, \infty)$, then for all $g\in C(\sphere)$,
$$
\int_{\sphere}\frac{\phi(\rho_{K}(u)u)\, \rho_{K}(u)^{n-1}}{\varphi'\left(\rho_{K}(u)\right)} g(u) \,du=
\begin{dcases}
\lim_{\ee\rightarrow0}\frac{\eV(K) - \eV(\widehat{\rho}_{\ee})}{\ee}\nonumber\\
\lim_{\ee\rightarrow0}\frac{\teV(\widehat{\rho}_{\ee})-
\teV(K)}{\ee},\nonumber
\end{dcases}
$$
where $\widehat{\rho}_{\ee}$ is given by \eqref{genplus} with $f_0=\rho_K$.
\ec

Formulas (\ref{variation--1--1--1}) and (\ref{variation--1--1--2}) motivate the following definition of the {\em general dual Orlicz mixed volume} $\veV(K, L)$. For $K, L\in \cS_{c+}^n$, continuous $\phi: \Rn\setminus\{o\}\rightarrow (0, \infty)$, and continuous $\varphi: (0, \infty)\rightarrow (0, \infty)$, let
\begin{equation}\label{dua1}
\veV(K, L)=\frac{1}{n}\int_{\sphere}\phi(\rho_{K}(u)u)\,\varphi\! \left(\frac{\rho_{L}(u)}{\rho_{K}(u)}\right) \rho_{K}(u)^n \,du.
\end{equation}
Then (\ref{variation--1--1--1}) and (\ref{variation--1--1--2}) become
$$
\veVt(K, L)=
\begin{dcases}
\frac{(\varphi_1)'_l(1)}{n} \lim_{\ee\rightarrow0^+}\frac{\eV(K)-\eV(\rho_\ee)}{\ee}\nonumber\\  \frac{(\varphi_1)'_l(1)}{n} \lim_{\ee\rightarrow0^+}\frac{\teV(\rho_\ee)-\teV(K)}{\ee}.\nonumber
\end{dcases}
$$
The special case of (\ref{variation--1--1--1}) and (\ref{variation--1--1--2}) when $\phi\equiv 1$ was proved in \cite[Theorem~5.4]{ghwy15} (see also \cite[Theorem~4.1]{Zhub2014}) and the corresponding quantity $\veV(K, L)$ was called the {\em Orlicz dual mixed volume}.

On the other hand, Corollary~\ref{mixed-theorem-1}(ii) suggests an alternative definition of the general dual mixed volume. For all $K\in \cS_{c+}^n$, $g\in C(\sphere)$, continuous $\phi: \Rn\setminus\{o\}\rightarrow (0, \infty)$, and continuous $\varphi: (0, \infty)\rightarrow (0, \infty)$, define
\begin{equation}\label{dua2}
\oveV(K, g)=\frac{1}{n}\int_{\sphere} \phi(\rho_{K}(u)u)\, \varphi\! \left(\rho_{K}(u)\right) g(u)\,du.
\end{equation}
Then the formulas in Corollary~\ref{mixed-theorem-1}(ii) can be rewritten as
\begin{numcases}
{\breve{V}_{\phi,\varphi_0}(K, g)=}
\lim_{\ee\rightarrow0}\frac{\eV(K) - \eV(\widehat{\rho}_{\ee})}{\ee}\nonumber\\
\lim_{\ee\rightarrow0}\frac{ \teV(\widehat{\rho}_{\ee})-\teV(K)}{\ee},\nonumber
\end{numcases}
where $\varphi_0(t)=nt^{n-1}/\varphi'(t)$.  In particular, one can define a dual Orlicz mixed volume of $K$ and $L$ by letting $g=\psi(\rho_L)$, where $\psi: (0,\infty)\rightarrow (0, \infty)$ is continuous and $L\in \cS_{c+}^n$, namely
$$\ovepV(K,L)=\frac{1}{n}\int_{\sphere} \phi(\rho_{K}(u)u)\, \varphi\! \left(\rho_{K}(u)\right)\,\psi(\rho_L(u))\,du.$$

Note that both $\veV(K, L)$ and $\oveV(K,g)$ are special cases of $\dveV(K)$, corresponding to setting
$$\wp(t,u)=\frac{1}{n}\phi(tu)\,\varphi\left(\frac{\rho_L(u)}{t}\right)\,t^n$$
or
$$\wp(t,u)=\frac{1}{n}\phi(tu)\,\varphi(t)\,g(u),$$
respectively.

\section{General variational formulas for Orlicz linear combinations}\label{section5}
We shall assume throughout the section that $\Omega\subset  S^{n-1}$ is a closed set not contained in any closed hemisphere of $S^{n-1}$.

Let $h_0,\rho_0\in C^+(\Omega)$ and let $h_{\ee}$ and $\rho_{\ee}$ be defined by (\ref{genplus}) with $f_0=h_0$ and $f_0=\rho_0$, respectively.  In Lemma~\ref{unif-conv-two-functions-2}(ii), we may replace $\rho_K$ by $h_0$ or $\rho_0$ to conclude that $h_{\ee}\rightarrow h_0$ and $\rho_{\ee}\rightarrow \rho_0$ uniformly on $\Omega$.  (In Section~\ref{prel}, $h_{\ee}$ and $\rho_{\ee}$ were denoted by $\widehat{h}_{\ee}$ and $\widehat{\rho}_{\ee}$, but hereafter we omit the hats for ease of notation.) Hence $[h_{\ee}]\rightarrow [h_0]$ and $\langle \rho_{\ee}\rangle \rightarrow \langle\rho_0\rangle$ as $\ee\rightarrow 0$. However, in order to get a variational formula for the general dual Orlicz volume, we shall need the following lemma.  It was proved for $\varphi(t)=\log t$ in \cite[Lemmas~4.1 and~4.2]{LYZActa} and was noted for $t^p$, $p\neq 0$, in the proof of \cite[Theorem~6.5]{LYZ-Lp}.  Recall from Section~\ref{Maps} that $\sphere\setminus\eta_{\langle \rho_0\rangle}$ is the set of regular normal vectors of $\langle \rho_0\rangle\in \cK_o^n$.

\bl \label{variation--11-21}
Let $g\in C(\Omega)$, let $\rho_0\in C^+(\Omega)$, and let $a\in \R\cup\{-\infty\}$. Suppose that $\varphi\in {\mathcal{J}}_a$ is continuously differentiable and such that $\varphi'$ is nonzero on $(0,\infty)$.  For $v\in S^{n-1}\setminus \eta_{\langle \rho_0\rangle}$,
\begin{equation}\label{414}
\lim_{\ee\rightarrow 0}\frac{\log h_{\langle \rho_{\ee}\rangle}(v)-\log h_{\langle \rho_0\rangle}(v)}{\ee}=\frac{g(\alpha_{\langle \rho_0\rangle^*}(v))}{\rho_{0}(\alpha_{\langle \rho_0\rangle^*}(v))\, \varphi'(\rho_{0}(\alpha_{\langle \rho_0\rangle^*}(v)))},
\end{equation}
where $\rho_{\ee}$ is defined by (\ref{genplus}) with $f_0=\rho_0$. Moreover, there exist $\delta, m_0>0$ such that
\begin{equation}\label{unif-bounded-11-21}
|\log h_{\langle \rho_{\ee}\rangle}(v)-\log h_{\langle \rho_0\rangle}(v)|\le m_0 |\ee|
\end{equation}
for $\ee\in (-\delta,\delta)$ and $v\in S^{n-1}$.
\el	

\begin{proof}
We shall assume that $\varphi\in {\mathcal{J}}_a$ is strictly increasing, since the case when it is strictly decreasing is similar. Since $g\in C(\Omega)$, we have $m_1=\sup_{u\in \Omega} |g(u)|<\infty$.
Then there exists $\delta_0>0$ such that for $\ee\in [-\delta_0, \delta_0]$ and $u\in \Omega$,
$$0<{\varphi}^{-1}\left(\varphi\left(\rho_0(u)\right)-\delta_0 m_1\right)\leq \rho_{\ee}(u) \leq {\varphi}^{-1}\left(\varphi\left(\rho_0(u)\right)+\delta_0 m_1\right)$$
and $\inf_{u\in \Omega}|\varphi'(\rho_{\ee}(u))|>0$. For $u\in \Omega$ and $\ee\in (-\delta_0, \delta_0)$, let
$$
H_u(\ee)=\log\rho_{\ee}(u)=\log\left(\varphi^{-1}(\varphi(\rho_0(u))+\ee\, g(u))\right),
$$
from which we obtain
$$
H_u'(\ee)=\frac{g(u)}{\rho_{\ee}(u)\varphi'(\rho_{\ee}(u))}.
$$
By the mean value theorem, for all $u\in\Omega$ and $\ee\in (-\delta_0,\delta_0)$, we get
$$
H_u(\ee)-H_u(0)=\ee\, H_u'(\theta\, \ee),
$$
where $\theta=\theta(u,\ee)\in (0,1)$. In other words,
\begin{equation}\label{hope1}
\log\rho_{\ee}(u)-\log\rho_{0}(u)=\ee\,\frac{g(u)}
{\rho_{\theta(u,\ee)\ee}(u)\,\varphi'(\rho_{\theta(u,\ee)\ee}(u))}
\end{equation}
for $u\in\Omega$ and $\ee\in(-\delta_0,\delta_0)$.

Let $v\in\sphere\setminus \eta_{\langle\rho_{0}\rangle}$.  If $\ee\in (-\delta_0, \delta_0)$, there is a $u_{\ee}\in\Omega$ such that for $u\in\Omega,$
\begin{eqnarray}
h_{\langle\rho_{\ee}\rangle}(v)&=& \langle u_\ee, v\rangle \rho_{\ee}(u_\ee), \ \ \ \  h_{\langle\rho_{\ee}\rangle}(v)\geq \langle u, v\rangle \rho_{\ee}(u) \label{vector-ee-1}, \\
h_{\langle\rho_{0}\rangle}(v)&\geq& \langle u_\ee, v\rangle \rho_{\langle\rho_{0}\rangle}(u_\ee), \ \ \mbox{and}\ \ \rho_{\langle\rho_{0}\rangle}(u_\ee)\geq \rho_{0}(u_\ee). \nonumber \end{eqnarray}
Moreover, $\langle u_\ee, v\rangle>0$ for $\ee\in (-\delta_0, \delta_0)$. Hence, using the equation in (\ref{vector-ee-1}), the inequality in (\ref{vector-ee-1}) with $u=u_{\ee}$, and \eqref{hope1} for $u=u_{\ee}$, we get
\begin{eqnarray}\label{comparison-11-21}
\log h_{\langle\rho_{\ee}\rangle}(v)-\log h_{\langle\rho_{0}\rangle}(v)
&\leq&\log \rho_{\ee}(u_{\ee})-\log \rho_{0}(u_{\ee}) \nonumber \\
&=&\ee\frac{g(u_\ee)}{\rho_{\theta(u_{\ee},\ee)\ee}( u_\ee)\, \varphi'(\rho_{\theta(u_{\ee},\ee)\ee}(u_\ee))}.
\end{eqnarray}	
From the equation in (\ref{vector-ee-1}) with $\ee=0$, the inequality in (\ref{vector-ee-1}) with $u=u_0$, and from \eqref{hope1} with $u=u_0$, we obtain
\begin{eqnarray}
\log h_{\langle\rho_{\ee}\rangle}(v)-\log h_{\langle\rho_{0}\rangle}(v)&=&\log h_{\langle\rho_{\ee}\rangle}(v)-\log \rho_{0}(u_0)-\log\langle u_0, v\rangle \nonumber\\
&\geq&\log \rho_{\ee}(u_{0})-\log \rho_{0}(u_{0}) \nonumber \\
&=&\ee\frac{g(u_0)}{\rho_{\theta(u_{0},\ee)\ee}( u_0)\, \varphi'(\rho_{\theta(u_{0},\ee)\ee}(u_0))}. \label{comparison-11-21-1}
\end{eqnarray}
Exactly as in \cite[(4.7), (4.8)]{LYZActa}, we have $u_0=\alpha^*_{\langle \rho_0\rangle}(v)=\alpha_{\langle \rho_0\rangle^*}(v)$ and  $\lim_{\ee\rightarrow0} u_{\ee}=u_{0}$. Since $g$ is continuous and $u_\ee\to u_0$, we get $g(u_\ee)\to g(u_0)$ as $\ee\to 0$. From $\theta(\cdot)\in(0,1)$ it follows that $\theta(\cdot)\ee\to 0$  as $\ee\to 0$.  Moreover, $\rho_{\theta(\cdot)\ee}(u_\ee)=\varphi^{-1}(\varphi(\rho_0(u_\ee))+\theta(\cdot)\ee g(u_\ee))\to \rho_0(u_0)$ and, similarly, $\rho_{\theta(\cdot)\ee}(u_0)\to \rho_0(u_0)$ as $\ee\to 0$.
Thus we  conclude that
\begin{eqnarray*}
\lim_{\ee\rightarrow 0}\frac{\log h_{\langle \rho_{\ee}\rangle}(v)-\log h_{\langle \rho_0\rangle}(v)}{\ee}=\frac{g(u_0)}{\rho_{0}(u_0)\, \varphi'(\rho_{0}(u_0))}.
\end{eqnarray*}
Substituting $u_0=\alpha^*_{\langle \rho_0\rangle}(v)$, we obtain (\ref{414}).

If $\delta_0$ is sufficiently small, then \eqref{comparison-11-21}  and  \eqref{comparison-11-21-1} imply that
if $v\in\sphere\setminus \eta_{\langle\rho_{0}\rangle}$ then
$$
\left|\log h_{\langle\rho_{\ee}\rangle}(v)-\log h_{\langle\rho_{0}\rangle}(v)\right| \le
|\ee|\, \sup_{u\in \Omega,\,\theta\in [0,1]}
\left|\frac{g(u)}{\rho_{\theta\ee}(u)\, \varphi'(\rho_{\theta\ee}(u))}\right|=m_2 |\ee|,
$$
say, for some $m_2<\infty$.  From this, we see that (\ref{unif-bounded-11-21}) holds for $v\in \sphere\setminus\eta_{\langle \rho_0\rangle}$ and hence, by (\ref{zerosets}) and the continuity of support functions, for $v\in \sphere$.
\end{proof}

\bl\label{ovev-11-27}
Let $g\in C(\Omega)$, let $h_0\in C^+(\Omega)$, and let $a\in \R\cup\{-\infty\}$. Suppose that $\varphi\in {\mathcal{J}}_a$ is continuously differentiable and such that $\varphi'$ is nonzero on $(0,\infty)$. If $\wp$ and $\wp_t$ are continuous on $(0, \infty)\times \sphere$, then
\begin{equation}\label{444}
\lim_{\ee\rightarrow 0} \frac{\dveV([h_{\ee}])-\dveV([h_{0}])}{\ee}=\int_{\sphere\setminus \eta_{\langle \kappa_0\rangle }} J(0, u)  \frac{\kappa_{0}(\alpha_{\langle \kappa_0\rangle^*}(u)) \, g(\alpha_{\langle \kappa_0\rangle^*}(u))}{\varphi'(\kappa_{0}(\alpha_{\langle \kappa_0\rangle^*}(u))^{-1})}\,du,
\end{equation}
where $h_{\ee}$ is given by \eqref{genplus} with $f_0=h_0$, and for $\ee$ sufficiently close to $0$, $\kappa_{\ee}=1/h_{\ee}$ and
\begin{equation}\label{Jdef}
J(\ee,u)=\rho_{\langle\kappa_{\ee}\rangle^*}(u)\, \wp_{t}(\rho_{\langle\kappa_{\ee}\rangle^*}(u),u).
\end{equation}
\el

\begin{proof}
Let $\overline{\varphi}(t)=\varphi(1/t)$ for all $t\in (0,\infty)$. Clearly $\overline{\varphi}\in {\mathcal{J}}_a$. Also, for $t\in (0, \infty)$, we have $\overline{\varphi}'(t)=-t^{-2}\varphi'(1/t)$. Hence $\overline{\varphi}$ satisfies the conditions for $\varphi$ in Lemma~\ref{variation--11-21}.  It is easy to check that
$$\kappa_{\ee}(u)= {\overline{\varphi}}^{-1}\left(\overline{\varphi}\left
(\kappa_0(u)\right)+\ee g(u) \right),
$$
that is, $\kappa_{\ee}$ is given by (\ref{genplus}) when $\varphi$ and $f_0$ are replaced by $\overline{\varphi}$ and $\kappa_0$.  By (\ref{414}), with $\rho_{\ee}$ and $\varphi$ replaced by $\kappa_{\ee}$ and ${\overline{\varphi}}$, respectively, for sufficiently small $|\ee|$, we obtain, for $u\in S^{n-1}\setminus \eta_{\langle \kappa_0\rangle}$,
\begin{eqnarray}
\lim_{\ee\rightarrow 0}\frac{\log \rho_{\langle \kappa_{ \ee}\rangle^*}(u)-\log \rho_{\langle \kappa_0\rangle^*}(u)}{\ee}& =&
-\lim_{\ee\rightarrow 0}\frac{\log h_{\langle \kappa_{ \ee}\rangle}(u)-\log h_{\langle \kappa_0\rangle}(u)}{\ee}\nonumber\\
&=&-\frac{g(\alpha_{\langle \kappa_0\rangle^*}(u))}{\kappa_{0}(\alpha_{\langle \kappa_0\rangle^*}(u))\,\overline{\varphi}'(\kappa_{0}(\alpha_{\langle \kappa_0\rangle^*}(u)))}\nonumber\\
&=&
\frac{\kappa_{0}(\alpha_{\langle \kappa_0\rangle^*}(u)) \, g(\alpha_{\langle \kappa_0\rangle^*}(u))}{\varphi'(\kappa_{0}(\alpha_{\langle \kappa_0\rangle^*}(u))^{-1})}.\label{unif-limit-11-27}
\end{eqnarray}
Moreover, comparing (\ref{unif-bounded-11-21}), there exist $\delta, m_0>0$ such that \begin{equation}\label{unif-bounded-11-27}
|\log h_{\langle \kappa_{ \ee}\rangle}(u)-\log h_{\langle \kappa_0\rangle}(u)|\le m_0 |\ee|
\end{equation}
for $\ee\in (-\delta,\delta)$ and $u\in S^{n-1}$.	

Note that
\begin{eqnarray} \label{inequality bound -11-27}
\frac{\,d \wp(\rho_{\langle \kappa_{\ee}\rangle^*}(u),u)}{\,d\ee}=
\wp_t(\rho_{\langle\kappa_{ \ee}\rangle^*}(u),u)\,\frac{d}{d\ee} \rho_{\langle\kappa_{ \ee}\rangle^*}(u)=
J(\ee, u)\,\frac{d}{d\ee}\log \rho_{\langle\kappa_{ \ee}\rangle^*}(u).
\end{eqnarray}
By our assumptions, there exists $0<\delta_1\le \delta$ and $m_1>0$ such that $|J(\ee, u)|<m_1$ for $\ee\in (-\delta_1, \delta_1)$ and $u\in \sphere$. It follows from (\ref{unif-bounded-11-27}), (\ref{inequality bound -11-27}), and the mean value theorem  that, for $\ee\in (-\delta_1, \delta_1)$ and $u\in \sphere$,
$$\left|\frac{\wp(\rho_{\langle \kappa_{\ee}\rangle^*}(u), u)-\wp(\rho_{\langle \kappa_{0}\rangle^*}(u), u)}{\ee}\right|<m_0m_1.
$$
From (\ref{relation}), we know that $[h_{\ee}]=\langle \kappa_{\ee} \rangle^*$, so $\langle\kappa_{\ee}\rangle^*\to \langle\kappa_{0}\rangle^*$ as $\ee\to 0$. By the dominated convergence theorem, (\ref{unif-limit-11-27}), and  (\ref{inequality bound -11-27}), we obtain
\begin{eqnarray*}
\lim_{\ee\rightarrow 0} \frac{\dveV([h_{\ee}])-\dveV([h_{0}])}{\ee}
&=&  \lim_{\ee\rightarrow 0}  \int_{\sphere}\frac{\wp(\rho_{\langle \kappa_{\ee}\rangle^*}(u), u)-\wp(\rho_{\langle \kappa_{0}\rangle^*}(u), u)}{\ee}  \,du\\&=&  \int_{\sphere} \lim_{\ee\rightarrow 0}   \frac{\wp(\rho_{\langle \kappa_{\ee}\rangle^*}(u), u)-\wp(\rho_{\langle \kappa_{0}\rangle^*}(u), u)}{\ee}  \,du\\
&=&\int_{\sphere\setminus \eta_{\langle \kappa_0\rangle }} J(0, u)  \frac{\kappa_{0}(\alpha_{\langle \kappa_0\rangle^*}(u))\, g(\alpha_{\langle \kappa_0\rangle^*}(u))}{\varphi'(\kappa_{0}(\alpha_{\langle \kappa_0\rangle^*}(u))^{-1})}\,du,
\end{eqnarray*}
where we have used the fact that ${\mathcal{H}}^{n-1}(\eta_{\langle\kappa_0\rangle})=0$ by (\ref{zerosets}).
\end{proof}

The next theorem will be used in the proof of Theorem~\ref{solution-general-dual-Orlicz-main theorem-11-27}.  It generalizes previous results of this type, which originated with \cite[Theorem~4.5]{LYZActa}; see the discussion after Corollary~\ref{variational-for-decreaing-1}.

\bt\label{ovev-cor}
Let $g\in C(\Omega)$, let $h_0\in C^+(\Omega)$, and let $a\in \R\cup\{-\infty\}$. Suppose that $\varphi\in {\mathcal{J}}_a$ is continuously differentiable and such that $\varphi'$ is nonzero on $(0,\infty)$. If $\wp$ and $\wp_t$ are continuous on $(0, \infty)\times \sphere$, then
\begin{equation}\label{variation-11-27-1}
\lim_{\ee\rightarrow 0} \frac{\dveV([h_{\ee}])-\dveV([h_{0}])}{\ee}=
n\int_{\Omega} g(u)\, d\deV([h_{0}], u),
\end{equation}
where $h_{\ee}$ is given by \eqref{genplus} with $f_0=h_0$, and $\psi(t)=t\varphi'(t)$.
\et

\begin{proof}
It follows from \cite[p.~364]{LYZActa} that there exists a continuous function $\overline{g}: S^{n-1}\rightarrow \R$, such that, for $u\in S^{n-1}\setminus \eta_{\langle\kappa_0\rangle}$, $$
g(\alpha_{\langle\kappa_0\rangle^*}(u))=
(\overline{g}1_{\Omega})(\alpha_{\langle \kappa_0\rangle^*}(u)).
$$
Using this, $\kappa_0=1/h_0$, the relation $\langle \kappa_{0} \rangle^*=[h_{0}]$ given by (\ref{relation}), (\ref{map-reverse}), (\ref{Jdef}) with $\ee=0$, ${\mathcal{H}}(\eta_{\langle\kappa_0\rangle})=0$ from (\ref{zerosets}), and (\ref{new measue-11-27}), the formula (\ref{444}) becomes
\begin{eqnarray*}
\lim_{\ee\rightarrow 0} \frac{\dveV([h_{\ee}])-\dveV([h_{0}])}{\ee}
&=&
\int_{\sphere\setminus \eta_{\langle \kappa_0\rangle }}
\frac{(\overline{g}1_{\Omega})(\alpha_{[h_0]}(u))\,\rho_{[h_0]}(u)\, \wp_{t}(\rho_{[h_0]}(u),u)}
{h_{0}(\alpha_{[h_0]}(u))\, \varphi'(h_{0}(\alpha_{[h_0]}(u)))}\,du\\
&=&
\int_{\sphere}
\frac{(\overline{g}1_{\Omega})(\alpha_{[h_0]}(u))\,\rho_{[h_0]}(u)\, \wp_{t}(\rho_{[h_0]}(u),u)}
{\psi(h_{0}(\alpha_{[h_0]}(u)))}\,du\\
&=&
n\int_{\Omega} g(u)\, d\deV([h_{0}], u),
\end{eqnarray*}
where we also used the fact that
$$
h_{[h_0]}(\alpha_{[h_0]}(u))=h_0(\alpha_{[h_0]}(u))\ \ \ \mathrm{for}~ \mathcal{H}^{n-1}\textrm{-almost~all}~u\in \sphere.
$$
To see this, note that for $\mathcal{H}^{n-1}$-almost all $u\in\sphere$, we have $\alpha_{[h_0]}(u)=\nu_{[h_0]}(\rho_{[h_0]}(u)u)$ and $\rho_{[h_0]}(u)u$ is a regular boundary point of $[h_0]$.  The rest is done by the proof of Lemma~7.5.1 in \cite[p.~411]{Sch}, which shows that if $x\in \partial [h_0]$ is a regular boundary point, then $h_{[h_0]}(\nu_{[h_0]}(x))=h_0(\nu_{[h_0]}(x))$.
\end{proof}

\br\label{remfeb25}
{\em It is possible to extend the definition (\ref{def-H-volume-12-07}) of the general dual volume $\dveV(K)$ by allowing continuous functions $G:(0,\infty)\times\sphere\to\R$.  In this case, of course, $\dveV(K)$ may be negative, but the extended definition has the advantage of including fundamental concepts such as the dual entropy $\widetilde{E}(K)$ of $K$.  This is defined by
$$\widetilde{E}(K)=\frac{1}{n}\int_{\sphere}\log\rho_K(u)\,du,$$
corresponding to taking $G(t,u)=(1/n)\log t$ in (\ref{def-H-volume-12-07}).  Definition~\ref{general dual Orlicz curvature measure-11-27} of the measure $\widetilde{C}_{G,\psi}$ and the integral formulas (\ref{new measue-11-27}) and (\ref{form-11-28}) remain valid for continuous functions $G:(0,\infty)\times\sphere\to\R$, as do Theorems~\ref{valprop}, \ref{mixed-theorem-12-09}, and \ref{ovev-cor}, as well as Theorem~\ref{variational-for-decreaing-12-10} below.}
\er

Theorem~\ref{ovev-cor} and its extended form indicated in Remark~\ref{remfeb25} may be used to retrieve the formulas in \cite[Theorem~6.5]{LYZ-Lp}, which in turn generalize those in \cite[Corollary~4.8]{LYZActa}.  To see this, let $K, L\in \cK^n_o$ and let $\varphi(t)=t^p$, $p\neq 0$.  Setting $h_0=h_K$ and $g=h_L^p$, we see from \eqref{genplus} with $f_0=h_0$ that $[h_{\ee}]=K\widehat{+}_p\,\ee\cdot L$, the $L_p$ linear combination of $K$ and $L$.  Taking $G(t,u)=(1/n)t^{q}\,\rho_Q(u)^{n-q}$, for some $Q\in \cS_{c+}^n$ and $q\neq 0$, where $t>0$ and $u\in \sphere$, we have $\dveV(K)=\widetilde{V}_q(K,Q)$ as in (\ref{qmixedv}).  With $\Omega=S^{n-1}$ and $\psi(t)=t\varphi'(t)=pt^p$, and using (\ref{new measue-11-27}) and (\ref{LYZint}), we obtain
\begin{eqnarray*}
n\int_{\Omega} g(u)\, d\deV([h_{0}], u)&=& n\int_{\sphere}h_L(u)^p
\,d\deV(K,u)\\
&=& \frac{q}{np}\int_{\sphere}\left(\frac{h_L(\alpha_K(u))}
{h_K(\alpha_K(u))}\right)^p\rho_K(u)^q\rho_Q(u)^{n-q}\,du\\
&=&\frac{q}{p}\int_{\sphere}h_L(u)^p\,d\widetilde{C}_{p,q}(K,Q,u).
\end{eqnarray*}
Thus (\ref{variation-11-27-1}) becomes
$$
\lim_{\ee\rightarrow 0} \frac{\widetilde{V}_q(K\widehat{+}_p\,\ee\cdot L, Q)-\widetilde{V}_q(K, Q)}{\ee}=
\frac{q}{p}\int_{\sphere}h_L(u)^p\,d\widetilde{C}_{p,q}(K,Q,u),
$$
the formula in \cite[(6.3), Theorem~6.5]{LYZ-Lp} (where $\widehat{+}_p$ is denoted by $+_p$; in our usage, the two are equivalent for $p\ge 1$, when $h_{\ee}$ above is a support function).  Next, we take instead $\varphi(t)=\log t$ and $g=\log h_L$, noting from \eqref{genplus} with $f_0=h_0$ that $[h_{\ee}]=K\widehat{+}_0\,\ee\cdot L$, the logarithmic linear combination of $K$ and $L$.  Then, again with $\Omega=S^{n-1}$ and $\psi(t)=t\varphi'(t)=1$, an argument similar to that above shows that (\ref{variation-11-27-1}) becomes
$$
\lim_{\ee\rightarrow 0} \frac{\widetilde{V}_q(K\widehat{+}_0\,\ee\cdot L, Q)-\widetilde{V}_q(K, Q)}{\ee}=
q\int_{\sphere}\log h_L(u)\,d\widetilde{C}_{0,q}(K,Q,u),
$$
the formula in \cite[(6.4), Theorem~6.5]{LYZ-Lp} (where $\widehat{+}_0$ is denoted by $+_0$).

If instead we take $G(t,u)=(1/n)\log(t/\rho_Q(u))\,\rho_Q(u)^{n}$, for some $Q\in \cS_{c+}^n$, where $t>0$ and $u\in \sphere$, we have
$$\dveV(K)=\frac{1}{n}\int_{\sphere}\log\left(\frac
{\rho_K(u)}{\rho_Q(u)}\right)\rho_Q(u)^n\,du=\widetilde{E}(K,Q),$$
the dual mixed entropy of $K$ and $Q$.  Then similar computations to those above show that (\ref{variation-11-27-1}) (now justified via Remark~\ref{remfeb25}) yield the variational formulas
\cite[(6.5) and (6.6), Theorem~6.5]{LYZ-Lp} for $\widetilde{E}(K,Q)$.

The following corollary is a direct consequence of the previous theorem with $G=\overline{\Phi}$ or $\underline{\Phi}$, and (\ref{meascon}) and (\ref{meascon2}) with $\psi(t)=t\varphi'(t)$.  When $\varphi(t)=\log t$, it was proved in \cite[Theorem~4.1]{XY2017-1}.

\bc\label{variational-for-decreaing-1}
Let $g\in C(\Omega)$, let $h_0\in C^+(\Omega)$, and let $a\in \R\cup\{-\infty\}$. Suppose that $\varphi\in {\mathcal{J}}_a$ is continuously differentiable and such that $\varphi'$ is nonzero on $(0,\infty)$. If $\phi:\R^n\setminus\{o\}\to (0,\infty)$ and $\overline{\Phi}$ (or $\underline{\Phi}$, as appropriate) are continuous, then
\begin{eqnarray}\label{nnn}
n\int_\Omega \frac{g(u)}{h_0(u)\,\varphi'(h_{0}(u))}\,d\cV([h_0],u)=
\begin{dcases}
\lim_{\ee\rightarrow 0} \frac{\eV([h_0])-\eV([h_{\ee}])}{\ee}\\
\lim_{\ee\rightarrow 0} \frac{\teV([h_{\ee}])-\teV([h_0])}{\ee},
\end{dcases}
\end{eqnarray}
where $h_{\ee}$ is given by \eqref{genplus} with $f_0=h_0$.
\ec

The following version of Theorem~\ref{ovev-cor} for Orlicz linear combination of the form (\ref{Orlicz-addition--1-1}) can be proved in a similar fashion.  We omit the proof. Recall that $\leV([h_1], \cdot)=\deV([h_1], \cdot)$ when $\psi\equiv 1$, as in Definition~\ref{general dual Orlicz curvature measure-11-27}.

\bt\label{variational-for-decreaing-12-10}
Let $h_1, h_2\in C^+(\Omega)$ and let $\varphi_1, \varphi_2 \in \mathcal{I}$ or $\varphi_1, \varphi_2\in \mathcal{D}$.  Suppose that for $i=1,2$, $\varphi_i$ is continuously differentiable and such that $\varphi_i'$ is nonzero on $(0,\infty)$.  If $\wp$ and $\wp_t$ are continuous on $(0, \infty)\times \sphere$, then
\begin{eqnarray*}
\lim_{\ee\rightarrow 0^+} \frac{\dveV([h_{\ee}])-\dveV([h_1])}{\ee} =  \frac{n}{\varphi_{1}'(1)}\int_\Omega \varphi_{2}\!\left(\frac{h_2(u)}{h_1(u)}\right)\,d\leV([h_1],u),
\end{eqnarray*}
where $h_{\ee}$ is given by \eqref{Orlicz-addition--1-1} with $h_K$ and $h_L$ replaced by $h_1$ and $h_2$, respectively.
\et

Again, the following corollary is a direct consequence of the previous theorem with $G=\overline{\Phi}$ or $\underline{\Phi}$.

\bc\label{variational-for-decreaing-12-10-cor}
Let $h_1, h_2\in C^+(\Omega)$ and let $\varphi_1, \varphi_2 \in \mathcal{I}$ or $\varphi_1, \varphi_2\in \mathcal{D}$.  Suppose that for $i=1,2$, $\varphi_i$ is continuously differentiable and such that $\varphi_i'$ is nonzero on $(0,\infty)$.  If $\phi:\R^n\setminus\{o\}\to (0,\infty)$ and $\overline{\Phi}$ (or $\underline{\Phi}$, as appropriate) are continuous, then
\begin{eqnarray*}
\frac{n}{\varphi_{1}'(1)}\int_\Omega \varphi_{2}\!\left(\frac{h_2(u)}{h_1(u)}\right)\,d\cV([h_1],u)=
\begin{dcases}
\lim_{\ee\rightarrow 0^+} \frac{\eV([h_1])-\eV([h_{\ee}])}{\ee}\\
\lim_{\ee\rightarrow 0^+} \frac{\teV([h_{\ee}])-\teV([h_1])}{\ee},
\end{dcases}
\end{eqnarray*}
where $h_{\ee}$ is given by \eqref{Orlicz-addition--1-1} with $h_K$ and $h_L$ replaced by $h_1$ and $h_2$, respectively.
\ec
	
\section{Minkowski-type problems} \label{section-6}
This section is dedicated to providing a partial solution to the Orlicz-Minkowski problem for the measure $\deV(K, \cdot)$.

\bl\label{continuity-general-dual-qu-11-27} Let $\wp: (0, \infty) \times \sphere\rightarrow (0, \infty)$ be continuous.  If $K_i\in \cK_o^n$, $i\in \N$, and $K_i\to K\in\cK_o^n$ as $i\to\infty$, then
$\lim_{i\rightarrow \infty}\dveV(K_{i})=\dveV(K)$.
\el

\begin{proof}
Since $K_i\to K\in\cK_o^n$, $\rho_{K_i}\rightarrow \rho_K$ uniformly on $\sphere$. By the continuity of $\wp$, we have $\lim_{i\rightarrow \infty} \wp(\rho_{K_i}(u),u)=\wp(\rho_{K}(u),u)$ and
$\sup\{\wp(\rho_{K_i}(u),u): i\in \N, u\in \sphere\}<\infty$.  It follows from the dominated convergence theorem that
\begin{eqnarray*}
\lim_{i\rightarrow \infty}\dveV(K_{i})=\lim_{i\rightarrow\infty} \int_{\sphere}\wp(\rho_{K_i}(u), u)\,du= \int_{\sphere}\lim_{i\rightarrow\infty}\wp(\rho_{K_i}(u), u)\,du=\dveV(K). \end{eqnarray*}
\end{proof}

\bp\label{prop 11-28--1}
Let $\wp$ and $\wp_t$ be continuous on $(0, \infty)\times \sphere$, let $\psi: (0,\infty)\rightarrow (0,\infty)$ be continuous, and let $K\in \cK_o^n$. The following statements hold.

\noindent{\rm{(i)}} The signed measure $\deV(K, \cdot)$ is absolutely continuous with respect to $S(K,\cdot)$.

\noindent {\rm{(ii)}} If $K_i\in \cK_o^n$, $i\in \N$, and $K_i\to K\in\cK_o^n$ as $i\to\infty$, then $\deV(K_i,\cdot)\rightarrow \deV(K,\cdot)$ weakly.

\noindent {\rm{(iii)}} If $\wp_t>0$ on $(0, \infty)\times\sphere$ (or $\wp_t<0$ on $(0, \infty)\times\sphere$), then $\deV(K,\cdot)$ (or $-\deV(K,\cdot)$, respectively) is a nonzero finite Borel measure not concentrated on any closed hemisphere.
\ep

\begin{proof}
(i) Let $E\subset S^{n-1}$ be a Borel set such that $S(K,E)=0$.  If $g=1_{E}$, the left-hand side of (\ref{new measue-11-27}) is $\deV(K,E)$.  This equals the expression in (\ref{form-11-28}), in which we observe that since $K\in \cK_o^n$, for $x\in\partial K$ both $|x|$ and $\langle x,\nu_{K}(x)\rangle=h_K(\nu_{K}(x))$ are bounded away from zero and bounded above, and hence our assumptions imply that
$$\sup_{x\in \partial K}\left|\frac{\rho_{K}(\bar{x})\, \wp_t(\rho_K(\bar{x}), \bar{x})\,\langle x,\nu_{K}(x)\rangle}{\psi (\langle x, \nu_{K}(x)\rangle)\, |x|^{n}}\right|=c<\infty,$$
where $\bar{x}=x/|x|$.  Then from (\ref{new measue-11-27}) and (\ref{form-11-28}) we conclude, using (\ref{surface:area:1}), that
$$\left|\deV(K,E)\right|\le c\int_{\partial K}1_{E}(\nu_K(x))\,dx=c\,{\mathcal{H}}^{n-1}(\nu_K^{-1}(E))=c\,S(K,E)=0.$$

(ii) Let $g:\sphere\to \R$ be continuous and let
$$I_K(u)=g(\alpha_K(u))\frac{\rho_{K}(u)\, \wp_t(\rho_K(u),u)}{\psi(h_{K}(\alpha_K(u)))}$$
be the integrand of the right-hand side of (\ref{new measue-11-27}). Suppose that $K_i\in \cK_o^n$, $i\in \N$, and $K_i\to K\in\cK_o^n$.  By \cite[Lemma~2.2]{LYZActa}, $\alpha_{K_i}\rightarrow \alpha_K$ and hence, by the continuity of $\wp_t$ and  the continuity of the map $(K,u)\mapsto h_K(u)$ (see \cite[Lemma 1.8.12]{Sch}), $I_{K_i}\to I_K$, ${\mathcal{H}}^{n-1}$-almost everywhere on $\sphere$.  Moreover, our assumptions clearly yield $\sup\{I_{K_i}(u): i\in \N, u\in \sphere\}<\infty$.  It follows from (\ref{new measue-11-27}) and the dominated convergence theorem that
$$\int_{\sphere}g(u)\,d\deV(K_i,u)\to \int_{\sphere}g(u)\,d\deV(K,u)$$
as $i\to\infty$, as required.

(iii)  Suppose that $\wp_t>0$ on $(0, \infty)\times\sphere$; the case when $\wp_t<0$ on $(0, \infty)\times\sphere$ is similar.  Let $m=\min_{x\in\partial K}J_K(x)$, where
$$J_K(x)=\frac{\rho_{K}(\bar{x})\, \wp_t(\rho_K(\bar{x}), \bar{x})\, \langle x, \nu_{K}(x)\rangle}{\psi (\langle x, \nu_{K}(x)\rangle)\, |x|^{n}},\quad x\in\partial K,$$
and $\bar{x}=x/|x|$.  Since $K\in \cK_o^n$, our assumptions imply that $m>0$.  By (\ref{new measue-11-27}) and (\ref{form-11-28}),
\begin{eqnarray*}
\int_{\sphere}\langle u,v\rangle_+\, d\deV(K, v)&=&
\int_{\partial K}
\langle u,\nu_{K}(x)\rangle_+\,J_K(x)\,dx\\
&\ge & m \int_{\partial K}
\langle u,\nu_{K}(x)\rangle_+\,dx=m\int_{\sphere}\langle u,v\rangle_+\,dS(K,v)>0,
\end{eqnarray*}
because $S(K, \cdot)$ satisfies (\ref{condition for Minkowski problem}). This shows that $\deV(K,\cdot)$ also satisfies (\ref{condition for Minkowski problem}).
\end{proof}

In view of Proposition~\ref{prop 11-28--1}(iii), one can ask the following Minkowski-type problem for the signed measure $\deV(\cdot, \cdot)$.

\begin{problem}\label{Minkowski-c-11-28}
For which nonzero finite Borel measures $\mu$ on $\sphere$ and continuous functions $\wp:(0, \infty)\times \sphere\rightarrow (0, \infty)$ and $\psi:(0,\infty)\to(0,\infty)$ do there exist $\tau\in \R$ and  $K\in \cK_{o}^n$ such that $\mu=\tau\,\deV(K,\cdot)$?
\end{problem}

It follows immediately from (\ref{mar1}), on using \cite[(2.2), p.~93 and (3.28), p.~106]{LYZ-Lp}, that solving Problem~\ref{Minkowski-c-11-28} requires finding an
$h:\sphere\to (0,\infty)$ and $\tau\in \R$ that solve (in the weak sense) the Monge-Amp\`{e}re equation
\begin{equation}\label{new2}
\frac{\tau h}{\psi\circ h}\,P(\bar{\nabla}h+h\iota)
\,\det(\bar{\nabla}^2h+hI)=f,
\end{equation}
where $P(x)=|x|^{1-n}G_t(|x|,\bar{x})$ for $x\in \R^n$.  Here $f$ plays the role of the density function of the measure $\mu$ in Problem~\ref{Minkowski-c-11-28} if $\mu$ is absolutely continuous with respect to spherical Lebesgue measure.  Formally, then, Problem~\ref{Minkowski-c-11-28} is more difficult, since it calls for $h$ in (\ref{new2}) to be the support function of a convex body and also a solution for measures that may not have a density function $f$.

To see that (\ref{new2}) is more general than (\ref{LYZPDE}), note firstly that the homogeneity of the left-hand side of (\ref{LYZPDE}) allows us to set $\tau=1$, without loss of generality (if $p\neq q$, which is true in the case $p>0$, $q<0$ of particular interest in the present paper).  Let $p,q\in \R$ and let $Q\in{\mathcal{S}}^n_{c+}$.
For $t>0$ and $u\in\sphere$, we set $\psi(t)=t^p$ and $G(t,u)=(1/q)t^{q}\rho_Q(u)^{n-q}$, if $q\neq 0$, and $G(t,u)=(\log t)\rho_Q(u)^{n}$, otherwise. (When $q\le 0$, we have $G:(0,\infty)\times\sphere\to\R$ and Remark~\ref{remfeb25} applies.)  Then, using the fact that $\rho_Q$ is homogeneous of degree $-1$, we have $P(x)=\rho_Q(x)^{n-q}$, for $q\in \R$ and $x\in \R^n\setminus\{o\}$.  Therefore (\ref{new2}) becomes
$$
h^{1-p}\,\|\bar{\nabla}h+h\iota \|_Q^{q-n}\det(\bar{\nabla}^2h+hI)=f,
$$
where $\|\cdot\|_Q=1/\rho_Q$ is the gauge function of $Q$.  Note that $\|\cdot\|_Q$ is an $n$-dimensional Banach norm if $Q$ is convex and origin symmetric.

Our contribution to Problem~\ref{Minkowski-c-11-28} is as follows.
For the statement and proof of the result, we define
$$
\Sigma_{\varepsilon}(v)=\{u\in S^{n-1}:\langle u, v\rangle\ge \varepsilon\}
$$
for $v\in\sphere$ and $\varepsilon\in (0,1)$.

\bt \label{solution-general-dual-Orlicz-main theorem-11-27}
Let $\mu$ be a nonzero finite Borel measure on $S^{n-1}$ not concentrated on any closed hemisphere.  Let $\wp$ and $\wp_t$ be continuous on $(0, \infty)\times \sphere$ and let $\wp_t<0$ on $(0,\infty)\times\sphere$.  Let $0<\ee_0<1$ and suppose that for $v\in \sphere$ and $0<\varepsilon\le \varepsilon_0$,
\begin{equation}\label{condE2}
\lim_{t\to 0+} \int_{\Sigma_{\varepsilon}(v)} \wp(t,u)\,du=\infty \qquad\text{and}\qquad
\lim_{t\to \infty} \int_{\sphere} \wp(t,u)\,du=0.
\end{equation}
Let $\psi: (0,\infty)\rightarrow (0,\infty)$ be continuous and satisfy
\begin{equation}\label{feb22}
\int_{1}^\infty\frac{\psi(s)}{s}\, ds=\infty.
\end{equation}
Then there exists $K\in \cK_o^n$ such that
\begin{equation}\label{msol}
\frac{\mu}{|\mu|}=\frac{\deV(K, \cdot)}{\deV(K, S^{n-1})}.
\end{equation}
\et

\begin{proof}
Note that the limits in \eqref{condE2} exist, since $t\mapsto G(t,u)$ is decreasing. Define
\begin{equation}\label{eqvarphi}
\varphi(t)=\int_1^t\frac{\psi(s)}{s}\, ds,\quad t>0,
\end{equation}
and
\begin{equation}\label{feb222}
a=-\int_0^1 \frac{\psi(s)}{s}\, ds\in \R\cup\{-\infty\}.
\end{equation}
Then, by (\ref{feb22}), (\ref{eqvarphi}), and (\ref{feb222}), $\varphi\in \mathcal{J}_a$ is strictly increasing and continuously differentiable with $t\varphi'(t)=\psi(t)$ for $t>0$; the latter equality implies that $\varphi'$ is nonzero on $(0,\infty)$.

For $f\in C^+(\sphere)$, let  \begin{equation}\label{functional-functional-11-28-21}
F(f)=\frac{1}{|\mu|}\int_{S^{n-1}}\varphi(f(u))\,d\mu(u),
\end{equation}
and for $K\in \cK_o^n$, define
$F(K)=F(h_K)$.  We claim that
\begin{equation}\label{optimization-convex-body-11-28-11}
\alpha=\inf\left\{F(K): \dveV(K)=|\mu| \ \mathrm{and}\ K\in \cK_o^n\right\}
\end{equation}
is well defined with $\alpha\in \R\cup\{-\infty\}$ because there is a $K \in \cK_o^n$ with $\dveV(K)=|\mu|$. To see this, note that
$$
\dveV(rB^n)=\int_{\sphere}G(r,u)\, du\ge \int_{\Sigma_\varepsilon(v)}G(r,u)\,du
$$
for any $v\in \sphere$ and $0<\varepsilon\le \varepsilon_0$. Then \eqref{condE2} yields $\dveV(rB^n)\to \infty$ as $r\to 0$, and $\dveV(rB^n)\to 0$ as $r\to \infty$. Since $r\to \dveV(rB^n)$ is continuous, there is an $r_0>0$ such that $\dveV(r_0B^n)=|\mu|$.  It follows from (\ref{optimization-convex-body-11-28-11}) that $\alpha\in \R\cup\{-\infty\}$.

By (\ref{optimization-convex-body-11-28-11}), there are $K_i \in\cK_o^n$, $i\in \N$, such that $\dveV(K_i)=|\mu|$ and
\begin{equation}\label{maximal-seq-11-28-11}
\lim_{i\rightarrow \infty} F(K_i)=\alpha.
\end{equation}
We aim to show that there is a $K_0 \in\cK_o^n$ with $\dveV(K_0)=|\mu|$ and $F(K_0)=\alpha$.

To this end, we first claim that there is an $R>0$ such that $K_i^{*} \subset R \ball$, $i\in \N$. Suppose on the contrary that $\sup_{i\in \N} R_{i}=\infty$, where $R_i=\max_{u\in\sphere}\rho_{K_{i}^{*}}(u)=\rho_{K_{i}^{*}}(v_i)$, say. By taking a subsequence, if necessary, we may suppose that $v_i\to v_0\in \sphere$ and $\lim_{i\to\infty}R_i=\infty$.  If $0<\ee\le \ee_0$ is given, there exists $i_{\varepsilon}\in \N$ such that  $|v_i-v_0|<\varepsilon/2$ whenever
$i\ge i_{\varepsilon}$.  Hence, if $u\in\Sigma_\varepsilon(v_0)$ and $i\ge i_{\varepsilon}$, then $\langle u,v_i\rangle\ge \varepsilon/2$.  It follows that for $u\in\Sigma_\varepsilon(v_0)$ and $i\ge i_{\varepsilon}$, we have
$$h_{K_i^{*}}(u)\ge \rho_{K_{i}^{*}}(v_i)\langle u, v_i\rangle=R_i\langle u, v_i\rangle\ge R_i\varepsilon/2$$
and therefore
\begin{eqnarray*}
|\mu|&=& \int_{\sphere} \wp(\rho_{K_i}(u), u)\,du
= \int_{\sphere} \wp(h_{K_i^*}(u)^{-1}, u)\,du\\
&\ge& \int_{\Sigma_\varepsilon(v_0)} \wp(h_{K_i^*}(u)^{-1}, u)\,du
\ge\int_{\Sigma_\varepsilon(v_0)} \wp( 2/(R_i\varepsilon), u)\,du \to \infty
\end{eqnarray*}
as $i\to\infty$.
This contradiction proves our claim.

By the Blaschke selection theorem, we may assume that $K_i^{*}\rightarrow L$ for some $L\in \cK^n$.  Suppose that $L\notin \cK_o^n$. Then $o\in \partial L$, so there exists $w_0\in S^{n-1}$ such that $\lim_{i\rightarrow \infty} h_{K_i^{*}}(w_0)=h_{L}(w_0)=0$.  Since $|\mu|>0$ and $\mu$ is not concentrated on any closed hemisphere, there is an $\ee\in (0,1)$ such that
$\mu(\Sigma_{\ee}(w_0))>0$. Let $v\in \Sigma_{\ee}(w_0)$. Since
$$
0\le \rho_{K_i^*}(v)\le \frac{1}{\langle v,w_0\rangle}h_{K_i^*}(w_0)\le \frac{1}{\varepsilon}h_{K_i^*}(w_0)\to 0
$$
as $i\to\infty$, it follows that $\rho_{K_i^{*}}\rightarrow 0$ uniformly on $\Sigma_{\ee} (w_0)$.  As $\dveV(K_i)=|\mu|$ and $K_i^{*} \subset R \ball$, using (\ref{bi-polar--1}), (\ref{functional-functional-11-28-21}), (\ref{optimization-convex-body-11-28-11}), and (\ref{maximal-seq-11-28-11}), we obtain
\begin{eqnarray*}
\alpha &=&\lim_{i\rightarrow \infty} F(K_i) =
\lim_{i\rightarrow \infty} \frac{1}{|\mu|}\int_{\sphere}\varphi \left(\rho_{K_i^*}(u)^{-1} \right)\,d\mu(u) \\
&\ge & \liminf_{i\rightarrow \infty} \frac{1}{|\mu|}\int_{\Sigma_{\varepsilon}(w_0)}\varphi \left(\rho_{K_i^*}(u)^{-1} \right)\,d\mu(u)+\frac{1}{|\mu|}\int_{\sphere\setminus \Sigma_{\varepsilon}(w_0)}\varphi \left(1/R \right)\,d\mu(u) \\
&\ge & \frac{\mu(\Sigma_\varepsilon(w_0))}{|\mu|}\liminf_{i\rightarrow \infty}\min\left\{\varphi\left(\rho_{K_i^*}(u)^{-1} \right):u\in\Sigma_{\varepsilon}(w_0)\right\}+\frac{\mu(\sphere\setminus \Sigma_{\varepsilon}(w_0))}{|\mu|}  \varphi \left(1/R \right)  =\infty.
\end{eqnarray*}
This is not possible, so $L\in \cK_o^n$.

Let $K_0=L^*\in \cK_o^n$. Then $K_i\to K_0$ as $i\to \infty$ in $\cK^n_o$. Hence, $h_{K_i} \rightarrow h_{K_0}>0$ uniformly on $\sphere$. The continuity of $\varphi$ ensures that
$$\sup\{|\varphi(h_{K_i}(u))|: i\in \N, u\in \sphere\}<\infty.$$
Now it follows from (\ref{functional-functional-11-28-21}), (\ref{maximal-seq-11-28-11}), and the dominated convergence theorem that
\begin{equation}\label{conv-11281159}
\alpha=\lim_{i\rightarrow \infty} F(K_i) = \frac{1}{|\mu|}\int_{S^{n-1}} \lim_{i\rightarrow \infty}\varphi(h_{K_i}(u))\,d\mu(u)=\frac{1}{|\mu|}
\int_{S^{n-1}}\varphi(h_{K_0}(u))\,d\mu(u)= F(K_0).
\end{equation}
Also, by Lemma~\ref{continuity-general-dual-qu-11-27}, we have $\dveV(K_0)=|\mu|$, so the aim stated earlier has been achieved.  It also follows from (\ref{conv-11281159}) that $\alpha\in \R$.

We now show that $K_0$ satisfies (\ref{msol}) with $K$ replaced by $K_0$.  Due to $\varphi\in {\mathcal{J}}_a$ and $f\geq h_{[f]}$, one has $F(f)\ge F(h_{[f]})= F([f])$ for $f\in C^+(\sphere)$.   By (\ref{conv-11281159}),
\begin{equation}\label{optimization-convex-body-11281157}
F(h_{K_0})=F(K_0)=\alpha=\inf\{F(f): \dveV([f])=|\mu| \ \mathrm{and}\ f\in C^+(\sphere)\}.
\end{equation}

Let $g\in C(\sphere)$.  For $u\in\sphere$ and sufficiently small $\ee_1,\ee_2\ge 0$, let $h_{\ee_1, \ee_2}$ be defined by (\ref{genplus}) with $f_0$ and $\ee g$ replaced by $h_{K_0}$ and $\ee_1 g +\ee_2$, respectively, i.e.,
\begin{equation}\label{feb172}
h_{\ee_1, \ee_2}(u) ={\varphi}^{-1}\left(\varphi(h_{K_0}(u))+\ee_1 g(u)+\ee_2\right).
\end{equation}
Then for sufficiently small $\ee$, we have
$$
h_{\ee_1+\ee,\ee_2}(u)=\varphi^{-1}\left(\varphi(h_{\ee_1,\ee_2}(u))+\ee g(u)\right)
$$
and
$$
h_{\ee_1,\ee_2+\ee}(u)=\varphi^{-1}\left(\varphi(h_{\ee_1,\ee_2}(u))+\ee\right).
$$
The properties of $\varphi$ listed after (\ref{feb222}) allow us to apply (\ref{variation-11-27-1}), with $\Omega=\sphere$ and with $h_0$ and $h_{\ee}$ replaced by $h_{\ee_1,\ee_2}$ and $h_{\ee_1+\ee,\ee_2}$, respectively, to obtain
\begin{equation}\label{variation-11-27-1---1}
\frac{\partial}{\partial\ee_1}\dveV([h_{\ee_1, \ee_2}])=
\lim_{\ee\rightarrow 0} \frac{\dveV([h_{\ee_1+\ee, \ee_2}])-\dveV([h_{\ee_1,\ee_2}])}{\ee}=
n\int_{\sphere} g(u)\,d\deV([h_{\ee_1,\ee_2}], u)
\end{equation}
and with $g$, $h_0$, and $h_{\ee}$ replaced by $1$, $h_{\ee_1,\ee_2}$ and $h_{\ee_1,\ee_2+\ee}$, respectively, to yield
\begin{equation}\label{variation-11-27-1---2}
\frac{\partial}{\partial\ee_2}\dveV([h_{\ee_1, \ee_2}]) =n\int_{\sphere} 1\,d\deV([h_{\ee_1,\ee_2}], u)=n\,\deV([h_{\ee_1,\ee_2}], \sphere)\neq 0.
\end{equation}
Since $[h_{\ee_1,\ee_2}]$ depends continuously on $\ee_1,\ee_2$ and in view of Proposition \ref{prop 11-28--1}(ii), (\ref{variation-11-27-1---1}) and (\ref{variation-11-27-1---2}) show that the gradient of the map $(\ee_1,\ee_2) \mapsto \dveV([h_{\ee_1, \ee_2}])$ has rank 1 and depends continuously on $(\ee_1,\ee_2)$, implying that this map is continuously differentiable.
Hence we may apply the method of Lagrange multipliers to conclude from (\ref{optimization-convex-body-11281157}) that there is a constant $\tau=\tau(g)$ such that
\begin{equation}\label{lagrange method}
\frac{\partial}{\partial \ee_1}\left(F(h_{\ee_1, \ee_2})+\tau(\log \dveV([h_{\ee_1, \ee_2}]) -\log |\mu|)\right)\Big|_{\ee_1=\ee_2=0}=0
\end{equation}
and
\begin{equation}\label{lagrange method--1}
\frac{\partial}{\partial \ee_2}\left(F(h_{\ee_1, \ee_2})+\tau(\log \dveV([h_{\ee_1, \ee_2}]) -\log |\mu|)\right)\Big|_{\ee_1=\ee_2=0}=0.
\end{equation}
By (\ref{functional-functional-11-28-21}) and (\ref{feb172}), we have
\begin{eqnarray}
\frac{\partial}{\partial \ee_1}F(h_{\ee_1, \ee_2})\Big|_{\ee_1=\ee_2=0}&=&\frac{1}{|\mu|}\left(\frac{\partial}{\partial \ee_1}\int_{\sphere}(\varphi(h_0(u))+\ee_1 g(u)+\ee_2)\,d\mu(u)\right)\Big|_{\ee_1=\ee_2=0} \nonumber \\
&=&\frac{1}{|\mu|}\int_{S^{n-1}} g(u)\,d\mu(u)\label{lag2}
\end{eqnarray}
and \begin{equation}\label{lag21}\ \ \ \
\frac{\partial}{\partial \ee_2}F(h_{\ee_1, \ee_2})\Big|_{\ee_1=\ee_2=0}=\frac{1}{|\mu|}\int_{S^{n-1}} 1\,d\mu(u)=1.
\end{equation}
Since $\dveV(K_0)=|\mu|$ and (\ref{feb172}) gives $h_{0,0}=h_{K_0}$, (\ref{variation-11-27-1---1}) and (\ref{variation-11-27-1---2}) imply that
\begin{equation}\label{lag1}
\frac{\partial}{\partial\ee_1}\log\dveV([h_{\ee_1, \ee_2}])\Big|_{\ee_1=\ee_2=0} =\frac{n}{|\mu|}\int_{\sphere} g(u)\,d\deV(K_0, u)
\end{equation}
and
\begin{equation}\label{lag1-2}
\frac{\partial}{\partial\ee_2}\log\dveV([h_{\ee_1, \ee_2}])\Big|_{\ee_1=\ee_2=0} =\frac{n}{|\mu|}\deV(K_0,\sphere).
\end{equation}
It follows from (\ref{lagrange method}), (\ref{lag2}), and (\ref{lag1}) that
\begin{equation}\label{identity-2018-0215-1}
\int_{S^{n-1}} g(u)\,d\mu(u) = -n\tau\int_{\sphere} g(u)\,d\deV(K_0, u)
\end{equation}
and from (\ref{lagrange method--1}), (\ref{lag21}), and (\ref{lag1-2}) that \begin{equation}\label{identity-2018-0215-2}
\tau = -\frac{|\mu|}{n\,\deV(K_0,\sphere)}.
\end{equation}
In particular, we see from (\ref{identity-2018-0215-2}) that $\tau$ is independent of $g$.  Finally, (\ref{identity-2018-0215-1}) and (\ref{identity-2018-0215-2}) show that (\ref{msol}) holds with $K$ replaced by $K_0$.
\end{proof}

We remark that $-\deV(K, \cdot)$ is a nonnegative measure since $G_t<0$.  Note that (\ref{condE2}) holds if $\lim_{t\rightarrow 0+}G(t, u)=\infty$ for $u\in \sphere$ and $\lim_{t\rightarrow \infty}G(t, u)=0$ for $u\in \Sigma_\ee (v)$. This follows from the monotone convergence theorem, since $t\mapsto G(t,u)$ is decreasing. In order to solve Problem~\ref{Minkowski-c-11-28} when $t\mapsto G(t,u)$ is increasing, one needs to use different techniques and we leave it for future work \cite{ghxy-1}.

When $\psi\equiv 1$ (and hence $\varphi(t)=\log t \in \mathcal{J}_{-\infty}$), the following result was proved in  \cite[Theorem~5.1]{XY2017-1}.
	
\bc \label{solution-general-dual-Orlicz-main theorem-11-26}
Let $\mu$ be a nonzero finite Borel measure on $S^{n-1}$ not concentrated on any closed hemisphere.  Let $\phi:\R^n\setminus\{o\}\to (0,\infty)$ be continuous and such that
$\overline{\Phi}$ is continuous on $(0,\infty)\times\sphere$, where $\overline{\Phi}$ is defined by \eqref{Phidef}. Suppose that for $v\in \sphere$ and $0<c<1$,
\begin{equation}\label{condE22}
\lim_{b\rightarrow 0+}\eV(C(v,b,c))=\infty,
\end{equation}
where $C(v,b,c)=\{x\in \R^n:|x|\ge b{\text{~and~}}\langle x/|x|, v\rangle\ge c\}$ and $\eV(\cdot)$ is defined by \eqref{general-quermass-11}. Let $\psi: (0,\infty)\rightarrow (0,\infty)$ be continuous and satisfy
\eqref{feb22}. Then there exists $K\in \cK_o^n$ such that
$$\frac{\mu}{|\mu|}=\frac{\ccV(K,\cdot)}{\ccV(K,S^{n-1})}.$$
\ec

\begin{proof}
By assumption,  $\of$ is continuous on $(0,\infty)\times\sphere$,  and $\lim_{t\rightarrow \infty} \of(t, u)=0$ for $u\in \sphere$. Hence the second condition in (\ref{condE2}) holds with $G$ replaced by $\of$.
Clearly, $\partial\of(t, u)/\partial t=-\phi(tu)t^{n-1}<0$.  By (\ref{condE22}),
$$\infty=\lim_{b\rightarrow 0+}\eV(C(v,b,c))=\lim_{b\rightarrow 0+}\int_{\Sigma_{c}(v)}\int_{b}^{\infty}\phi(ru)r^{n-1}drdu=
\lim_{b\rightarrow 0+}\int_{\Sigma_{c}(v)}\of(b, u)du.$$
Therefore the first condition in (\ref{condE2}) also holds with $G$ replaced by $\of$.  Since $\widetilde{C}_{\overline{\Phi},\psi}(K,\cdot)=-\ccV(K,\cdot)$,  Theorem~\ref{solution-general-dual-Orlicz-main theorem-11-27} yields the result.
\end{proof}

Another special case arises if $\mu$ is a discrete measure on $\sphere$, that is, $\mu=\sum_{i=1}^m c_i\delta_{v_i}$,
where $c_i>0$ for $i=1,\ldots,m$, and $v_1,\ldots,v_m\in \sphere$ are not contained in any closed hemisphere.  Let $G$ and $\psi$ be as in Theorem~\ref{solution-general-dual-Orlicz-main theorem-11-27}. Then there exists a polytope $P\in \cK_o^n$ such that
$$
\frac{\mu}{|\mu|}=\frac{\deV(P, \cdot)}{\deV(P, S^{n-1})}.
$$
To see this, note that Theorem~\ref{solution-general-dual-Orlicz-main theorem-11-27} ensures the existence of a $K\in \cK_o^n$ such that \eqref{msol} holds. Since $\mu$ is discrete, we obtain
$$\deV(K, \cdot)=\sum_{i=1}^m\bar c_i\delta_{v_i},$$
where $\bar c_i=\deV(K, S^{n-1})c_i/|\mu|<0$ for $i=1,\ldots,m$. Proposition~\ref{prop 11-28--1}(i) shows that there is a measurable function $g:\sphere\to (-\infty,0]$ such that
$$
\sum_{i=1}^m\bar c_i\delta_{v_i}(E)=\int_{E}g(u)\,dS(K,u)
$$
for Borel sets $E\subset \sphere$. Hence $S(K,\cdot)$ is a discrete measure and \cite[Theorem 4.5.4]{Sch} implies that $K$ is a polytope.

\section{Dual Orlicz-Brunn-Minkowski inequalities}\label{section7}
Let $\of_m$ be the set of continuous functions $\varphi: [0,\infty)^m\to [0,\infty)$ that are strictly increasing in each component and such that $\varphi(o)= 0$, $\varphi(e_j)=1$ for $1\leq j\leq m$, and $\lim_{t\to \infty} \varphi(tx) =\infty$ for $x\in [0,\infty)^m\setminus\{o\}$. By $\Psi_m$ we mean the set of continuous functions $\varphi: (0,\infty)^m\to(0,\infty)$, such that for $x=(x_1, \ldots, x_m) \in (0,\infty)^m$,
\begin{equation}\label{relation of G-12-06}
\varphi(x)=\varphi_0(1/x_1, \ldots, 1/x_m)
\end{equation}
for some $\varphi_0\in \of_m$. It is easy to see that
if $\varphi\in \Psi_m$, then $\varphi$ is strictly decreasing in each component and such that $\lim_{t\to 0} \varphi(tx) =\infty$ and $\lim_{t\to \infty} \varphi(tx) =0$ for $x\in (0,\infty)^m$.

Let $K_1, \ldots, K_m\in \cS^n_{c+}$ and let $\varphi\in\of_m\cup\Psi_m$. Define $\Oadd(K_1,\ldots,K_m)\in \cS_{c+}^n$, the {\em radial Orlicz sum} of $K_1, \ldots, K_m$, to be the star body whose radial function satisfies
\begin{equation}\label{Orldef}
\varphi\!\left( \frac{\rho_{K_1}(u)}{\rho_{\Oadd(K_1,\ldots,K_m)}(u)},\ldots, \frac{\rho_{K_m}(u)}{\rho_{\Oadd(K_1,\ldots,K_m)}(u)}\right) = 1
\end{equation}
for $u\in \sphere$.  It was proved in \cite[Theorem~3.2(v) and (vi)]{ghwy15} that if $\varphi\in \of_m$, then
\begin{equation}\label{Orldef-1206-1}
\rho_{\Oadd(K_1,\ldots,K_m)}(u)> \rho_{K_j}(u) \ \ \ \ \mbox{for}~ u\in \sphere.
\end{equation}
Together with (\ref{relation of G-12-06}) and (\ref{Orldef}), this implies that if $\varphi\in \Psi_m$, then
\begin{equation}\label{Orldef-1206-2} \rho_{\Oadd(K_1,\ldots,K_m)}(u)< \rho_{K_j}(u) \ \ \ \ \mbox{for}~ u\in \sphere.
\end{equation}
For each $0\neq q \in \R$ and $\varphi\in \of_m\cup\Psi_m$, let \begin{equation}\label{phiq}
\varphi_q(x)=\varphi\!\left(x_1^{1/q}, x_2^{1/q}, \ldots, x_m^{1/q}\right) \ \ \ \ \mbox{for~} x=(x_1, \ldots, x_m) \in (0, \infty)^m.
\end{equation}
Then (\ref{Orldef}) is equivalent to  \begin{equation}\label{Orldef-12-07}
\varphi_q\!\left( \left(\frac{\rho_{K_1}(u)}{\rho_{\Oadd(K_1,\ldots,K_m)}(u)}\right)^q,
\ldots,\left(\frac{\rho_{K_m}(u)}{\rho_{\Oadd(K_1,\ldots,K_m)}(u)}
\right)^q\right) = 1.
\end{equation}
For $t\in (0, \infty)$ and $u\in \sphere$, let
\begin{equation}\label{gdeff}
\wp_q(t, u)=\frac{\wp(t, u)}{t^q}.
\end{equation}

The proof of the following result closely follows that of \cite[Theorem~4.1]{ghwy15}.

\begin{theorem} \label{dual O-B-M-h}
Let $m, n\ge 2$, let $\varphi\in \of_m\cup\Psi_m$, let $K_1,\ldots, K_m\in \cS_{c+}^n$, let $\wp: (0, \infty)\times \sphere\rightarrow (0, \infty)$ be continuous, and let $\varphi_q$ and $\wp_q$ be defined by \eqref{phiq} and \eqref{gdeff}.  Suppose that $\varphi_q$ is convex and either $q>0$ and $\wp_q(t, \cdot)$ is increasing, or $q<0$ and $\wp_q(t, \cdot)$ is decreasing. Then
\begin{equation}\label{71}
1  \geq  \varphi\!\left(\left(\frac{\dveV(K_1) }{\dveV(\Oadd(K_1, \ldots, K_m))}\right)^{1/q},\ldots, \left(\frac{\dveV(K_m) }{\dveV(\Oadd(K_1, \ldots, K_m))}\right)^{1/q}\right). \end{equation}
The reverse inequality holds if instead $\varphi_q$ is concave and either $q>0$ and $\wp_q(t, \cdot)$ is decreasing, or $q<0$ and $\wp_q(t, \cdot)$ is increasing.

If in addition $\varphi_q$ is strictly convex (or convex, as appropriate) and equality holds in \eqref{71}, then $K_1,\dots, K_m$ are dilatates of each other.
\end{theorem}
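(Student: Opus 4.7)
The plan is to adapt the proof strategy of \cite[Theorem~4.1]{ghwy15}. Put $K_0=\Oadd(K_1,\ldots,K_m)$, and note that $K_0\in\cS_{c+}^n$ with $\rho_{K_0}>0$ and $\dveV(K_0)>0$, using $\wp>0$. The defining identity \eqref{Orldef-12-07} says that for every $u\in\sphere$,
$$
\varphi_q\!\left(\Big(\tfrac{\rho_{K_1}(u)}{\rho_{K_0}(u)}\Big)^q,\ldots,\Big(\tfrac{\rho_{K_m}(u)}{\rho_{K_0}(u)}\Big)^q\right)=1.
$$
Let $d\mu(u)=\wp(\rho_{K_0}(u),u)\,du/\dveV(K_0)$ be the probability measure on $\sphere$ induced by $\wp$ and $K_0$. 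Integrating the displayed identity against $\mu$ gives $\int_{\sphere}\varphi_q(\cdots)\,d\mu=1$, so multivariate Jensen's inequality applied to the convex function $\varphi_q$ yields
$$
\varphi_q\!\left(\int_{\sphere}\Big(\tfrac{\rho_{K_1}}{\rho_{K_0}}\Big)^q d\mu,\ldots,\int_{\sphere}\Big(\tfrac{\rho_{K_m}}{\rho_{K_0}}\Big)^q d\mu\right)\le 1.
$$

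The key intermediate step is to relate these averages to $\dveV(K_j)/\dveV(K_0)$ via the factorization $\wp(t,u)=t^q\wp_q(t,u)$, which gives
$$
\frac{\dveV(K_j)}{\dveV(K_0)}=\int_{\sphere}\frac{\wp(\rho_{K_j},u)}{\wp(\rho_{K_0},u)}\,d\mu=\int_{\sphere}\Big(\tfrac{\rho_{K_j}}{\rho_{K_0}}\Big)^q\,\frac{\wp_q(\rho_{K_j},u)}{\wp_q(\rho_{K_0},u)}\,d\mu.
$$
Three monotonicity facts now enter: \eqref{Orldef-1206-1}--\eqref{Orldef-1206-2} fix the sign of $\rho_{K_j}-\rho_{K_0}$ according to whether $\varphi\in\of_m$ or $\varphi\in\Psi_m$; the sign of $q$ dictates both whether $t\mapsto t^q$ preserves or reverses that order and whether each variable of $\varphi_q(x)=\varphi(x^{1/q})$ is increasing or decreasing; and the monotonicity hypothesis on $\wp_q(\cdot,u)$ fixes the sign of the correction factor $\wp_q(\rho_{K_j},u)/\wp_q(\rho_{K_0},u)-1$. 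In each of the four resulting $(\of_m/\Psi_m)\times(\mathrm{sgn}\,q)$ sub-cases one checks that these combine to give
$$
\varphi_q\!\left(\tfrac{\dveV(K_1)}{\dveV(K_0)},\ldots,\tfrac{\dveV(K_m)}{\dveV(K_0)}\right)\le\varphi_q\!\left(\int_{\sphere}\Big(\tfrac{\rho_{K_1}}{\rho_{K_0}}\Big)^q d\mu,\ldots\right),
$$
and combining with the Jensen bound yields \eqref{71}. The concave case is entirely symmetric, with Jensen and the intermediate step both reversing.

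For the equality assertion, suppose $\varphi_q$ is strictly convex and equality holds in \eqref{71}. Equality must then hold in the Jensen step, which forces the vector-valued function $u\mapsto((\rho_{K_1}(u)/\rho_{K_0}(u))^q,\ldots,(\rho_{K_m}(u)/\rho_{K_0}(u))^q)$ to be $\mu$-a.e.\ constant on $\sphere$; since $\mu$ has a continuous strictly positive density and each radial function is continuous, this forces $\rho_{K_j}=c_j\,\rho_{K_0}$ on $\sphere$ for positive constants $c_j$, so $K_1,\ldots,K_m$ are dilates of each other. The main technical obstacle is not any individual inequality but the bookkeeping in the middle paragraph: one has to verify in each of the four sub-cases that the direction of the pointwise comparison $\rho_{K_j}\lessgtr\rho_{K_0}$, the monotonicity of each variable of $\varphi_q$, and the monotonicity of $\wp_q(\cdot,u)$ all align so that the correction factor $\wp_q(\rho_{K_j},u)/\wp_q(\rho_{K_0},u)$ pushes the inequality in the right direction rather than cancelling.
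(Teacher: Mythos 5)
Your proposal is correct and follows essentially the same route as the paper's proof: the same probability measure $d\mu=\wp(\rho_{\Oadd(K_1,\ldots,K_m)}(u),u)\,du/\dveV(\Oadd(K_1,\ldots,K_m))$, the same Jensen step applied to $\varphi_q$ via \eqref{Orldef-12-07}, the same pointwise comparison combining \eqref{Orldef-1206-1}--\eqref{Orldef-1206-2} with the monotonicity of $\wp_q(\cdot,u)$ (your ``correction factor'' is exactly the paper's inequality \eqref{feb23} in ratio form), and the same use of monotonicity of $\varphi_q$ in each component, with the equality case handled identically through strict convexity in Jensen's inequality.
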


\begin{proof}
Let $\varphi\in \of_m\cup \Psi_m$ and let $K_1, \ldots, K_m\in \cS_{c+}^n$. It follows from (\ref{Orldef}) that $\rho_{\Oadd(K_1, \ldots, K_m)}(u)>0$ for $u\in \sphere$. By (\ref{def-H-volume-12-07}), one can define a probability measure $\mu$ on $\sphere$  by
\begin{equation}\label{dualcone-12-07}
\,d\mu(u)= \frac{\wp(\rho_{\Oadd(K_1, \ldots, K_m)}(u), u)}{\dveV(\Oadd(K_1, \ldots, K_m))}\,du.
\end{equation}
Suppose that $\varphi\in \of_m$, $q>0$, and $\wp_q(t, \cdot)$ is increasing.  By (\ref{Orldef-12-07}) and Jensen's inequality \cite[Proposition 2.2]{ghwy15} applied to the convex function $\varphi_q$, similarly to the proof of \cite[Theorem~4.1]{ghwy15}, we have
\begin{eqnarray}
1&=&\int_{\sphere}\varphi_q\!\left( \left(\frac{\rho_{K_1}(u)}{\rho_{\Oadd(K_1,\ldots,K_m)}(u)}\right)^q,
\ldots, \left(\frac{\rho_{K_m}(u)}{\rho_{\Oadd(K_1,\ldots,K_m)}(u)}\right)^q
\right)\,d\mu(u) \nonumber \\
&\geq & \varphi_q\!\left(\int_{\sphere}\frac{\rho_{K_1}(u)^q}{
\rho_{\Oadd(K_1,\ldots, K_m)}(u)^q}\,d\mu(u),\ldots, \int_{\sphere}\frac{\rho_{K_m}(u)^q}{
\rho_{\Oadd(K_1,\ldots, K_m)}(u)^q}\,d\mu(u) \right)\label{d-O-B-M-12-07}.
\end{eqnarray}	
Since $\varphi\in \of_m$ and $q>0$, $\varphi_q$ is strictly increasing in each component. According to (\ref{Orldef-1206-1})  and the fact that $\wp_q(t, \cdot)$ is increasing, we have
\begin{equation}\label{feb23}
\frac{\rho_{K_j}(u)^q}{
\rho_{\Oadd(K_1,\ldots, K_m)}(u)^q} \wp(\rho_{\Oadd(K_1, \ldots, K_m)}(u), u) \geq \wp(\rho_{K_j}(u), u)
\end{equation}
for $j=1, \ldots, m$. Using (\ref{dualcone-12-07}), we obtain
\begin{eqnarray*}
\frac{\dveV(K_j) }{\dveV(\Oadd(K_1, \ldots, K_m))} &=& \frac{1}{\dveV(\Oadd(K_1, \ldots, K_m))} \int_{\sphere} \wp(\rho_{K_j}(u), u)\,du\\ &\leq &\frac{1}{\dveV(\Oadd(K_1, \ldots, K_m))}  \int_{\sphere}\frac{\rho_{K_j}(u)^q\, \wp(\rho_{\Oadd(K_1, \ldots, K_m)}(u), u)  }{
\rho_{\Oadd(K_1,\ldots, K_m)}(u)^q} \,du \\&=&  \int_{\sphere}\frac{\rho_{K_j}(u)^q}{
\rho_{\Oadd(K_1,\ldots, K_m)}(u)^q} \,d\mu(u)
\end{eqnarray*}
for $j=1, \ldots, m$.  Since $\varphi_q$ is strictly increasing in each component and (\ref{d-O-B-M-12-07}) holds, we get  \begin{eqnarray}
1 &\geq & \varphi_q\left(\int_{\sphere}\frac{\rho_{K_1}(u)^q}{
\rho_{\Oadd(K_1,\ldots, K_m)}(u)^q}\,d\mu(u),\ldots, \int_{\sphere}\frac{\rho_{K_m}(u)^q}{
\rho_{\Oadd(K_1,\ldots, K_m)}(u)^q}\,d\mu(u) \right)\nonumber \\&\geq & \varphi_q\left(\frac{\dveV(K_1) }{\dveV(\Oadd(K_1, \ldots, K_m))},\ldots, \frac{\dveV(K_m) }{\dveV(\Oadd(K_1, \ldots, K_m))} \right)\nonumber\\&=&\varphi \left(\left(\frac{\dveV(K_1) }{\dveV(\Oadd(K_1, \ldots, K_m))}\right)^{1/q},\ldots, \left(\frac{\dveV(K_m) }{\dveV(\Oadd(K_1, \ldots, K_m))}\right)^{1/q}\right),\label{feb3}
\end{eqnarray}
which yields (\ref{71}).

Suppose in addition that $\varphi_q$ is strictly convex and equality holds in (\ref{71}).  Then equality holds throughout (\ref{feb3}) and hence in (\ref{d-O-B-M-12-07}).  Therefore equality holds in Jensen's inequality as used above.  Since $G>0$, the definition (\ref{dualcone-12-07}) of $\mu$ shows that its support is the whole of $S^{n-1}$. Then, exactly as in the proof of \cite[Theorem~4.1]{ghwy15}, we can conclude that $K_1,\dots,K_m$ are dilatates of each other.

This proves (\ref{71}) and the implication in case of equality when $\varphi\in \of_m$, $q>0$, and $\wp_q(t, \cdot)$ is increasing.  The other cases are similar, noting that if $\varphi\in \Psi_m$, we can use (\ref{Orldef-1206-2}) instead of (\ref{Orldef-1206-1}), and if $\varphi_q$ is concave, Jensen's inequality \cite[Proposition 2.2]{ghwy15} yields the reverse of inequality (\ref{d-O-B-M-12-07}).
\end{proof}

It is possible to state more general versions of Theorem~\ref{dual O-B-M-h} that hold when $K_1,\ldots, K_m\in \cS^n$. Indeed, the definition (\ref{Orldef}) of the radial Orlicz sum can be modified, as in \cite[p.~817]{ghwy15}, so that it applies when $K_1,\ldots, K_m\in \cS^n$.  Then extra assumptions would have to be made in Theorem~\ref{dual O-B-M-h}, analogous to the one in \cite[Theorem~4.1]{ghwy15} that $V_n(K_j)>0$ for some $j$, but now also involving the function $G$ .  Note that the stronger assumption that $K_1,\ldots, K_m\in \cS_{c+}^n$ is still required for the implication in case of equality, as it is in \cite[Theorem~4.1]{ghwy15}.

Under certain circumstances, equality holds in Theorem~\ref{dual O-B-M-h} if and only if $K_1,\dots, K_m$ are dilatates of each other.  One such is given in Corollary~\ref{dual O-B-M-h-1}, and it is easy to see that this is true more generally if $G$ is of the form $G(t,u)=t^qH(u)$, where $t>0$ and $u\in\sphere$, for some $q\neq 0$ and suitable function $H$, since equality then holds in (\ref{feb23}). However, it does not seem straightforward to formulate a precise condition and we do not pursue the matter here.

Dual Orlicz-Brunn-Minkowski inequalities for $\eV(\cdot)$, $\teV(\cdot)$ and $\oveV(\cdot, \cdot)$ follow directly from Theorem~\ref{dual O-B-M-h}, once the corresponding assumptions are verified.  We shall only state the special case when $\wp(t, u)=t^q\rho_Q(u)^{n-q}/n$ for some $Q\in \cS_{c+}^n$. Then, for $q\neq 0$, we have
\begin{equation}\label{q-mixed-12-09}
\dveV(K)=\int_{\sphere} \wp(\rho_K(u), u)\,du =\frac{1}{n}\int_{\sphere}\rho_K(u)^q\,\rho_Q(u)^{n-q}\,du=\teVq(K, Q),
\end{equation}
the $q$th dual mixed volume of $K$ and $Q$, as in (\ref{qmixedv}).

The following result was proved for $q=n$ and $Q=\ball$ in \cite[Theorem~4.1]{ghwy15}.

\bc \label{dual O-B-M-h-1}
Let $m, n\ge 2$, let $q\neq 0$, let $\varphi\in \of_m\cup\Psi_m$, and let $Q, K_1, \ldots, K_m\in \cS_{c+}^n$. If $\varphi_q$ is convex, then
\begin{equation}\label{feb224}
1  \geq  \varphi\!\left(\left(\frac{\teVq(K_1, Q)}{\teVq(\Oadd(K_1, \ldots, K_m), Q)}\right)^{1/q},\ldots, \left(\frac{\teVq(K_m, Q) }{\teVq(\Oadd(K_1, \ldots, K_m), Q)}\right)^{1/q}\right).
\end{equation}
If $\varphi_q$ is concave, the inequality is reversed. If instead $\varphi_q$ is strictly convex or strictly concave, respectively, then
equality holds in \eqref{71} if and only if $K_1, \ldots, K_m$ are dilatates of each other.
\ec

\begin{proof}
The required inequalities and the necessity of the equality condition follow immediately from Theorem~\ref{dual O-B-M-h} on noting that $\wp_q(t, u)=\rho_Q(u)^{n-q}/n$ is a constant function of $t$.

Suppose that $K_1, \ldots, K_m$ are dilatates of each other, so $K_i=c_iK$ and hence $\rho_{K_i}=c_i\rho_K$ for some $K\in \cS_{c+}^n$ and $c_i>0$, $i=1,\dots,m$.  Let $d>0$ be the unique solution of
\begin{equation}\label{feb223}
\varphi\left(\frac{c_1}{d},\dots,\frac{c_m}{d}\right)=1.
\end{equation}
Comparing (\ref{Orldef}), we obtain $\rho_{\Oadd(K_1,\ldots,K_m)}(u)=d\rho_K(u)$ for $u\in\sphere$ and hence we have $\Oadd(K_1,\ldots,K_m)=dK$.  From (\ref{q-mixed-12-09}), we get $\teVq(K_i, Q)=c_i^q\,\teVq(K, Q)$, $i=1,\dots,m$, and $\teVq(\Oadd(K_1,\ldots,K_m), Q)=d^q\,\teVq(K, Q)$.  Substituting for $c_i$, $i=1,\dots,m$, and $d$ from the latter two equations into (\ref{feb223}), we obtain (\ref{feb224}) with equality.
\end{proof}

\section{Dual Orlicz-Minkowski inequalities and uniqueness results}\label{section8}
Let $K, L, Q\in \cS_{c+}^n$, let $q\neq 0$, and let $\varphi: (0, \infty)\rightarrow (0, \infty)$ be continuous.  It will be convenient to define
\begin{equation}\label{H-def-mixed-12-09-1}
\HeVq(K,L,Q)=\frac{1}{n}\int_{\sphere} \varphi\left(\frac{\rho_{L}(u)}{\rho_{K}(u)}\right)\,\rho_{K}(u)^q \, \rho_Q(u)^{n-q} \,du.
\end{equation}
Note that this is a special case of the general dual Orlicz mixed volume $\veV(K, L)$ defined in (\ref{dua1}), obtained by setting $\phi(x)=|x|^{q-n}\rho_Q(x/|x|)^{n-q}$.  When $q=n$, (\ref{H-def-mixed-12-09-1}) becomes the dual Orlicz mixed volume introduced in \cite{ghwy15, Zhub2014}, and when $q=n$ and $Q=\ball$, the following result yields the dual Orlicz-Minkowski inequality established in \cite[Theorem~6.1]{ghwy15} and \cite[Theorem~5.1]{Zhub2014}.

\begin{theorem} \label{dual-O-M-I-1209}
Let $K, L, Q\in \cS_{c+}^n$, let $q\neq 0$, let $\varphi: (0, \infty)\rightarrow (0, \infty)$ be continuous, and let $\varphi_q(t)=\varphi(t^{1/q})$ for $t\in (0, \infty)$.  If $\varphi_q$ is convex, then
\begin{eqnarray}\label{81}
\HeVq(K, L, Q)\ge \teVq(K, Q)\,\varphi\!\left(\left(\frac{\teVq(L, Q) }{\teVq(K, Q)}\right)^{1/q}\right).
\end{eqnarray}
The reverse inequality holds if $\varphi_q$ is concave.
If $\varphi_q$ is strictly convex or strictly concave, respectively, equality holds in the above inequalities if and only if $K$ and $L$ are dilatates of each other.
\end{theorem}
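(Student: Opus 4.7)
The plan is to reduce the inequality to a single application of Jensen's inequality, following the template of Corollary \ref{dual O-B-M-h-1} but in the ``mixed'' setting where a measure against which to integrate is already naturally available. First I would define the probability measure $\mu$ on $\sphere$ by
$$d\mu(u) = \frac{\rho_K(u)^q\,\rho_Q(u)^{n-q}}{n\,\teVq(K,Q)}\,du,$$
which makes sense because $K,Q\in \cS_{c+}^n$ force $\rho_K,\rho_Q$ to be strictly positive and continuous on $\sphere$, so $\teVq(K,Q)>0$ and the density is a strictly positive continuous function. Using the identity $\varphi(t) = \varphi_q(t^q)$, which follows from $\varphi_q(s) = \varphi(s^{1/q})$, I would then rewrite the definition \eqref{H-def-mixed-12-09-1} as
$$\frac{\HeVq(K,L,Q)}{\teVq(K,Q)} = \int_{\sphere} \varphi_q\!\left(\left(\frac{\rho_L(u)}{\rho_K(u)}\right)^q\right) d\mu(u).$$

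Next I would apply Jensen's inequality to the convex function $\varphi_q$ against the probability measure $\mu$, obtaining
$$\int_{\sphere} \varphi_q\!\left(\left(\frac{\rho_L(u)}{\rho_K(u)}\right)^q\right) d\mu(u) \geq \varphi_q\!\left(\int_{\sphere}\left(\frac{\rho_L(u)}{\rho_K(u)}\right)^q d\mu(u)\right).$$
A direct computation, cancelling $\rho_K(u)^q$ between the density of $\mu$ and the integrand, identifies the inner integral:
$$\int_{\sphere} \left(\frac{\rho_L(u)}{\rho_K(u)}\right)^q d\mu(u) = \frac{1}{n\,\teVq(K,Q)}\int_{\sphere}\rho_L(u)^q \rho_Q(u)^{n-q}\,du = \frac{\teVq(L,Q)}{\teVq(K,Q)}.$$
Substituting back and using $\varphi_q(t) = \varphi(t^{1/q})$ yields \eqref{81}. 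The concave case is entirely analogous with Jensen's inequality reversed.

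For the equality characterization, assume $\varphi_q$ is strictly convex and equality holds in \eqref{81}. Then equality holds in Jensen's inequality, which forces $(\rho_L/\rho_K)^q$ to be constant $\mu$-almost everywhere. Since $\mu$ has full support on $\sphere$ (its density is strictly positive and continuous), $\rho_L/\rho_K$ is a positive constant on $\sphere$, so $K$ and $L$ are dilatates. Conversely, if $L=cK$ for some $c>0$, then $\rho_L = c\rho_K$ on $\sphere$, and one checks directly from \eqref{H-def-mixed-12-09-1} and \eqref{q-mixed-12-09} that $\HeVq(K,L,Q) = \varphi(c)\,\teVq(K,Q)$ while $\teVq(L,Q)/\teVq(K,Q) = c^q$, so both sides of \eqref{81} equal $\varphi(c)\,\teVq(K,Q)$. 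The strictly concave case is handled in the same way.

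Honestly, I do not anticipate a serious obstacle here; the whole argument is essentially a renormalization plus one invocation of Jensen's inequality, with the convexity hypothesis on $\varphi_q$ (rather than on $\varphi$ itself) being exactly what is needed to absorb the power $q$ appearing in the weight $\rho_K^q$. The only point requiring a small amount of care is to make sure the measure $\mu$ is well-defined and has full support, which is automatic from $K,Q\in \cS_{c+}^n$, and to verify that the equality case in Jensen's inequality really does translate to $K$ and $L$ being dilatates, which is where the assumption $L\in \cS_{c+}^n$ (continuity of $\rho_L$) enters.
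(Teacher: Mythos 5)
Your proof is correct and follows essentially the same route as the paper's: you construct the identical probability measure with density $\rho_K^q\rho_Q^{n-q}/(n\teVq(K,Q))$, rewrite $\varphi$ as $\varphi_q$ composed with the $q$th power, and apply Jensen's inequality to $\varphi_q$. The only (welcome) addition is that you explicitly verify the converse direction of the equality characterization---that dilatates force equality---which the paper's proof leaves as an implicit computation.
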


\begin{proof}
Let $q\neq 0$ and let $\varphi_q$ be convex. By (\ref{q-mixed-12-09}), one can define a probability measure $\tilde{\mu}$ by
$$
d\tilde{\mu}(u)=\frac{\rho_{K}(u)^q\,\rho_Q(u)^{n-q}}{n\teVq(K, Q)}\,du.
$$
 Jensen's inequality \cite[Proposition 2.2]{ghwy15} implies that
\begin{eqnarray*}
\HeVq(K, L, Q)&=&\frac{1}{n}\int_{\sphere} \varphi\left(\frac{\rho_{L}(u)}{\rho_{K}(u)}\right)\,\rho_{K}(u)^q \, \rho_Q(u)^{n-q} \,du \\ &=&\teVq(K, Q) \int_{\sphere} \varphi_q\!\left(\left(\frac{\rho_{L}(u)}{\rho_{K}(u)}\right)^q\right)  \,d\tilde{\mu}(u) \\&\geq& \teVq(K, Q)\,\varphi_q\!\left( \int_{\sphere}\left(\frac{\rho_{L}(u)}{\rho_{K}(u)}\right)^q  \,d\tilde{\mu}(u)\right) \\
&=& \teVq(K, Q)\,\varphi_q\!\left(\int_{\sphere}\frac{ \rho_{L}(u)^q \,\rho_Q(u)^{n-q}}{n\teVq(K, Q)} \,du\right) \\
&=& \teVq(K, Q) \,\varphi\!\left(\left(\frac{\teVq(L, Q) }{\teVq(K, Q) }\right)^{1/q}\right),
\end{eqnarray*}
where the first and the last equalities are due to (\ref{H-def-mixed-12-09-1}) and (\ref{q-mixed-12-09}), respectively.

Suppose that $\varphi_q$ is strictly convex and equality holds in (\ref{81}).  Then the above proof and the equality condition for Jensen's equality show that $\rho_{L}(u)/\rho_{K}(u)$ is a constant for $\tilde{\mu}$-almost all $u\in \sphere$ and hence for ${\mathcal{H}}^{n-1}$-almost all $u\in \sphere$. Since $\rho_K$ and $\rho_L$ are continuous, $\rho_{L}(u)/\rho_{K}(u)$ is a constant for $u\in \sphere$ and so $K$ and $L$ are dilatates of each other.

If instead $\varphi_q$ is concave, the proof is similar since Jensen's inequality \cite[Proposition 2.2]{ghwy15} also reverses. \end{proof}

\bc
Let $K, L, Q\in \cS_{c+}^n$, let $q\neq 0$, let $\varphi:(0,\infty)\to (0,\infty)$, and let $\varphi_q(t)=\varphi(t^{1/q})$ for $t\in (0, \infty)$.  Suppose that $\varphi$ is either increasing or decreasing, and that $\varphi_q$ is either strictly convex or strictly concave.  Then $K=L$ if either
\begin{equation}\label{formula-1218-1}
\frac{\HeVq(K, M, Q)}{\teVq(K, Q) }=\frac{\HeVq(L, M, Q)}{\teVq(L, Q)}
\end{equation}
holds for all $M \in {\mathcal{S}}_{c+}^n$, or
\begin{equation}\label{formula-1218-2}
\HeVq(M, K, Q)=\HeVq(M, L, Q)
\end{equation}
holds for all $M \in {\mathcal{S}}_{c+}^n$.
\ec

\begin{proof}
Let $q\neq 0$ and suppose that (\ref{formula-1218-1}) holds for all $M\in \cS_{c+}^n$.  Assume that $\varphi$ is increasing and $\varphi_q$ is strictly convex; the other three cases can be dealt with similarly.  Taking $M=K$ in (\ref{formula-1218-1}), it follows from (\ref{qmixedv}), (\ref{H-def-mixed-12-09-1}) with $L=K$, and (\ref{81}) with $K$ and $L$ interchanged, that
\begin{equation}\label{appl-12-10}
\varphi(1)= \frac{\HeVq(K, K, Q)}{\teVq(K, Q) } = \frac{\HeVq(L, K, Q)}{\teVq(L, Q) }\ge \varphi\!\left(\left(\frac{\teVq(K, Q) }{\teVq(L, Q) }\right)^{1/q}\right).
\end{equation}
Since $\varphi$ is increasing, we get
\begin{equation}\label{appl-ineq-11}
1 \geq \left(\frac{\teVq(K, Q) }{\teVq(L, Q) }\right)^{1/q}.
\end{equation}
Repeating the argument with $K$ and $L$ interchanged yields the reverse inequality. Hence we get $\teVq(K, Q)=\teVq(L, Q)$, from which we obtain equality in (\ref{appl-12-10}). The equality condition for (\ref{81}) implies that $L=rK$ for some $r>0$.  This together with $\teVq(K, Q)=\teVq(L, Q)$ easily yields $K=L$.

Now suppose that (\ref{formula-1218-2}) holds for all $M\in \cS_{c+}^n$.  Taking $M=K$ and arguing as above, we get
\begin{equation}\label{appl-ineq-222}
\varphi(1)\,\teVq(K, Q) = \HeVq(K, K, Q) = \HeVq(K, L, Q) \ge  \teVq(K, Q)\, \varphi\!\left(\left(\frac{\teVq(L, Q) }{\teVq(K, Q) }\right)^{1/q}\right).
\end{equation}
Therefore (\ref{appl-ineq-11}) holds.  Interchanging $K$ and $L$ yields the reverse inequality and hence we have $\teVq(K, Q)=\teVq(L, Q)$, giving equality in (\ref{appl-ineq-222}). Exactly as above, we conclude that $K=L$.
\end{proof}

\bc\label{thm1s}
Let $K, L, Q\in \cS_{c+}^n$, let $q\neq 0$, let $\varphi: (0, \infty)\rightarrow (0, \infty)$ be continuous, and let $\varphi_q(t)=\varphi(t^{1/q})$ for $t\in (0, \infty)$.  If $\varphi_q$ is strictly convex or strictly concave and
\begin{equation}\label{83}
\HeVq(K, M, Q)=\HeVq(L, M, Q)
\end{equation}
for all $M\in {\mathcal{S}}_{c+}^n$, then $K=L$.
\ec

\begin{proof}
Let $q\neq 0$ and let $\alpha>0$.  Replacing $K$ and $L$ by $L$ and $\alpha L$, respectively, in (\ref{H-def-mixed-12-09-1}), and taking (\ref{q-mixed-12-09}) into account, we obtain,
$$ \HeVq(L, \alpha L, Q)=\frac{\varphi(\alpha)}{
\varphi(1)}\HeVq(L, L, Q)=\varphi(\alpha)\teVq(L, Q).$$

Suppose that $\varphi_q$ is strictly convex; the case when $\varphi_q$ is strictly concave is similar. Using (\ref{83}) with $M=\alpha L$, (\ref{81}) implies that
\begin{equation}\label{oeq1-12-10}
\varphi(\alpha)\teVq(L, Q) =\HeVq(L, \alpha L, Q) =\HeVq(K, \alpha L, Q) \ge \teVq(K, Q) \, \varphi\!\left(\alpha \left(\frac{\teVq(L, Q) }{\teVq(K, Q) }\right)^{1/q}\right).
\end{equation}
Let $$c= \left(\frac{\teVq(L, Q) }{\teVq(K, Q)}\right)^{1/q}.$$  Then (\ref{oeq1-12-10}) reads $c^q\varphi(\alpha) \ge \varphi(\alpha c)$.  When $\alpha=1$, we obtain
\begin{equation}\label{oeq2}
c^q\varphi(1)\ge \varphi(c).
\end{equation}
Repeating the argument with $K$ and $L$ interchanged yields $c^{-q}\varphi(\alpha)\ge \varphi(\alpha c^{-1})$.  Setting $\alpha=c$, we get $c^{-q}\varphi(c)\ge \varphi(1)$ and hence
\begin{equation}\label{oeq3}
c^q\varphi(1)\le \varphi(c).
\end{equation}
By (\ref{oeq2}) and (\ref{oeq3}), $\varphi(c)=c^q\varphi(1)$, which means that
$$\varphi\!\left(\left(\frac{\teVq(L, Q) }{\teVq(K, Q)}\right)^{1/q}\right)=\frac{\teVq(L, Q) }{\teVq(K, Q)}\,\varphi(1).$$
Thus equality holds in (\ref{oeq1-12-10}) when $\alpha=1$. By the equality condition for (\ref{81}),  we conclude that $L=rK$ for some $r>0$. That is, $K$ and $L$ are dilatates of each other.

Suppose that $L=rK$, where $r>0$ and $r\neq 1$. Let $\alpha>0$.  Then (\ref{q-mixed-12-09}), (\ref{H-def-mixed-12-09-1}), and (\ref{83}) with $M=\alpha K$ yield
$$\varphi(\alpha)\teVq(K, Q)=\HeVq (K, \alpha K, Q)=\HeVq (rK, \alpha K, Q)=\varphi(\alpha/r)r^q \,\teVq(K, Q).$$
Consequently, $\varphi(rs)=r^q\varphi(s)$ for $s>0$. Equivalently, setting $\beta=r^q$ and $t=s^q$, we obtain $\varphi_q(\beta t)=\beta\varphi_q(t)$ for $t>0$, where $\beta\neq 1$.  But then the points $(\beta^m, \varphi_q(\beta^m))$, $m\in \N$, all lie on the line $y=\varphi(1)x$ in $\R^2$, so $\varphi_q$ cannot be strictly convex.  This contradiction proves that $r=1$ and hence $K=L$.
\end{proof}

Let $K, L\in {\mathcal{K}}_{o}^n$. We recall from \cite{GHW2014, XJL} that for $\varphi\in(0, \infty)\rightarrow (0, \infty)$, the {\em Orlicz mixed volume} $V_{\varphi}(K,L)$ is defined by
\begin{equation}\label{intreprLp}
V_{\varphi}(K,L)=\frac{1}{n}\int_{S^{n-1}}
\varphi\left(\frac{h_L(u)}{h_K(u)}\right)h_K(u)\,dS(K,u).
\end{equation}
The {\em Orlicz-Minkowski inequality} \cite[Theorem~9.2]{GHW2014} (see also \cite[Theorem~2]{XJL}) states that if $\varphi\in \mathcal{I}$ is convex, then
\begin{equation}\label{NewMI}
V_{\varphi}(K,L)\ge V_n(K)\,\varphi\!\left(\left(\frac{V_n(L)}{V_n(K)}\right)^{1/n}\right).
\end{equation}
If $\varphi$ is strictly convex, equality in (\ref{NewMI}) holds if and only if $K$ and $L$ are dilatates of each other.  When $\varphi(t)=t$, we write $V_{\varphi}(K,L)=V_1(K,L)$ and retrieve from (\ref{NewMI}) {\em Minkowski's first inequality}
\begin{equation}\label{M122}
V_1(K,L)\ge V_n(K)^{(n-1)/n}V_n(L)^{1/n}.
\end{equation}
Note that (\ref{M122}) actually holds for all $K,L\in \cK^n$, with equality if and only if $K$ and $L$ lie in parallel hyperplanes or are homothetic; see
\cite[Theorem~B.2.1]{Gar06} or \cite[Theorem~6.2.1]{Sch}.

Let $\varphi\in \mathcal{I}\cup\mathcal{D}$ and let $n\in \N$, $n\ge 2$.  We say that $\varphi$ {\em behaves like $t^n$} if there is $r>0$, $r\neq 1$, such that $\varphi(rt)=r^n\varphi(t)$ for $t>0$.  Of course, if $\varphi(t)=t^n$, then $\varphi$ behaves like $t^n$, but there is a $\varphi\in \mathcal{I}\cup\mathcal{D}$ that behaves like $t^n$ such that $\varphi(t)\neq t^n$ for some $t>0$. To see this, let $f(t)=t^n$ and define $\varphi(t)$ on $[1,2]$, such that (i) $\varphi$ is increasing and strictly convex, (ii) $\varphi(t)=f(t)$ at $t=1$ and $t=2$, (iii) $\varphi'_r(1)=f'(1)$ and $\varphi'_l(2)=f'(2)$, and (iv) $\varphi(t)<f(t)$ on $(1,2)$.  Then define $\varphi$ on $[1/2,1]$ by $\varphi(t)=\varphi(2t)/2^n$ and on $[2,4]$ by $\varphi(t)=2^n\varphi(t/2)$. It follows that $\varphi$ is increasing and strictly convex on $[1/2,1]$ and on $[2,4]$, $\varphi(t)=f(t)$ at $t=1/2$ and $t=4$,  $\varphi'_r(1/2)=\varphi'_r(1)/2^{n-1}=f'(1/2)$, $\varphi'_l(4)=2^{n-1}\varphi_l'(2)=f'(4)$ and $\varphi(t)<f(t)$ on $(1/2,1)\cup (2,4)$.  Moreover, $\varphi'_l(t)=\varphi'_r(t)$ at $t=1$ and $t=2$, so $\varphi$ is increasing and strictly convex on $[1/2,4]$.  Continuing inductively, we define $\varphi$ on $[1/2^m,2^{m+1}]$, $m\in \N$, and hence on $(0,\infty)$, so that it is increasing and strictly convex, $\varphi(t)=t^n$ for $t=1/2^m$ and $t=2^m$, $m\in \N$, and $\varphi(t/2)=2^{-n}\varphi(t)$ for $t>0$,
but $\varphi$ is not identically equal to $t^n$.  This construction for $r=1/2$ (or, equivalently, $r=2$) can be easily modified for other values of $r>0$, $r\neq 1$.

The following result can be obtained from (\ref{NewMI}) and the argument in the proof of Corollary~\ref{thm1s}.

\bc\label{thm1}
Let $K, L\in {\mathcal{K}}_o^n$. Suppose that $\varphi\in \mathcal{I}$ is strictly convex and $V_{\varphi}(K, M)=V_{\varphi}(L, M)$ for all $M\in {\cK}^n_{o}$.  Then $K$ and $L$ are dilatates of each other.  Moreover, $K=L$ unless $\varphi$ behaves like $t^n$.
\ec

Note that the restriction in the second statement of the previous theorem is necessary, since it is evident from (\ref{intreprLp}) that if $\varphi$ behaves like $t^n$, then for the corresponding $r\neq 1$, we have $V(K, M)=V(rK, M)$ for all $M\in {\cK}^n_{o}$.

Let $K, L\in {\mathcal{K}}_o^n$, let $Q\in \cS^n_{c+}$, and let $p,q\in \R$.  In \cite[(1.13), p.~91]{LYZ-Lp}, the {\em $(p,q)$-mixed volume} $\widetilde{V}_{p,q}(K,L,Q)$ was defined by setting $g=h_L^p$ in (\ref{LYZint}):
\begin{eqnarray}\label{LYZmv}
\widetilde{V}_{p,q}(K,L,Q)&=&\int_{\sphere} h_L(u)^p\, d\widetilde{C}_{p,q}(K,Q,u)\nonumber\\
&=&\frac{1}{n}\int_{\sphere}h_L(\alpha_K(u))^p\,
h_{K}(\alpha_K(u))^{-p}\,\rho_{K}(u)^q\,\rho_Q(u)^{n-q}\,du.
\nonumber\\
&=&\frac{1}{n}\int_{\sphere}\left(\frac{h_L(\alpha_K(u))}
{h_{K}(\alpha_K(u))}\right)^p\,
\left(\frac{\rho_{K}(u)}{\rho_Q(u)}\right)^q\,\rho_Q(u)^{n}\,du.
\end{eqnarray}
Inspired by (\ref{LYZmv}), we can consider the nonlinear Orlicz dual curvature functionals defined by
$$
\frac{1}{n}\int_{S^{n-1}}
\varphi\!\left(\psi\!\left(\frac{f(\alpha_K(u))}{h_K(\alpha_K(u))}\right)
\left(\frac{\rho_K(u)}{\rho_Q(u)}\right)^n\right)\,\rho_Q(u)^n\,du,
$$
where $\varphi, \psi: (0, \infty)\rightarrow (0, \infty)$ are continuous functions and $f\in C^+(S^{n-1})$.  We can then take $f=h_L$ to define the {\em $(\varphi,\psi)$-mixed volume}
$$
\widetilde{V}_{\varphi,\psi}(K,L,Q)=\frac{1}{n}\int_{S^{n-1}}
\varphi\!\left(\psi\!\left(\frac{h_L(\alpha_K(u))}{h_K(\alpha_K(u))}\right)
\left(\frac{\rho_K(u)}{\rho_Q(u)}\right)^n\right)\,\rho_Q(u)^n\,du.
$$
This is a natural generalization of (\ref{LYZmv}) when $q\neq 0$, corresponding to taking $\varphi(t)=t^{q/n}$ and $\psi(t)=t^{np/q}$.

When $L\in {\mathcal{K}}_{o}^n$, the following result provides a common generalization of \cite[Theorem~9.2]{GHW2014}, \cite[Theorem~6.1]{ghwy15} (see also \cite[Theorem~2]{Zhub2014}), and \cite[Theorem~7.4]{LYZ-Lp}.  The first corresponds to taking $K=Q$ when $\varphi$ there is replaced by $\varphi\circ\psi$, the second corresponds to taking $K=L$, and the third is obtained by the choices of $\varphi$ and $\psi$ given in the previous paragraph.  Note that in the latter case, for the convexity of $\varphi$ and $\psi$ we then require that $1\le q/n\le p$, which is precisely the assumption made in \cite{LYZ-Lp}.

\bt\label{thm2}
Let $K,L\in {\mathcal{K}}_{o}^n$ and let $Q\in {\mathcal{S}}_{c+}^n$. If $\varphi, \psi: (0, \infty)\rightarrow (0, \infty)$ are increasing and convex, then
\begin{equation}\label{Orliczgmvineq}
\widetilde{V}_{\varphi,\psi}(K,L,Q)\ge
\varphi\!\left(\frac{V_n(K)}{V_n(Q)}\,\psi\!
\left(\left(\frac{V_n(L)}{V_n(K)}\right)^{1/n}\right)\right)V_n(Q).
\end{equation}
If $\varphi$ and $\psi$ are strictly convex, equality holds if and only if $K$, $L$, and $Q$ are dilatates of each other.
\et

\begin{proof}
Setting $Q=K$ and $p=1$ in \cite[(7.6), Proposition~7.2]{LYZ-Lp}, (\ref{LYZmv}), and (\ref{LYZint}), we have, for any $q\neq 0$,
\begin{eqnarray}\label{feb16}
V_1(K,L)&=& \widetilde{V}_{1,q}(K,L,K)\nonumber\\
&=&\int_{\sphere}h_L(u)\,d{\widetilde{C}}_{1,q}(K,K,u)\nonumber\\
&=&\frac{1}{n}\int_{\sphere}\frac{h_L(\alpha_K(u))}{h_K(\alpha_K(u))}
\rho_K(u)^n\,du.
\end{eqnarray}
We use Jensen's inequality \cite[Proposition 2.2]{ghwy15} twice, once with $\varphi$ and once with $\psi$, Minkowski's first inequality (\ref{M122}), and (\ref{feb16}) to obtain
\begin{eqnarray*}
\frac{\widetilde{V}_{\varphi,\psi}(K,L,Q)}{V_n(Q)}
&=&\frac{1}{n V_n(Q)}\int_{S^{n-1}}
\varphi\!\left(\psi\!\left(\frac{h_L(\alpha_K(u))}{h_K(\alpha_K(u))}\right)
\left(\frac{\rho_K(u)}{\rho_Q(u)}\right)^n\right)\rho_Q(u)^n\,du\\
&\ge &\varphi\!\left(\frac{1}{n V_n(Q)}\int_{S^{n-1}}\psi\!\left(\frac{h_L(\alpha_K(u))}{h_K(\alpha_K(u))}\right)
\left(\frac{\rho_K(u)}{\rho_Q(u)}\right)^n\rho_Q(u)^n\,du\right)\\
&=&\varphi\!\left(\frac{V_n(K)}{V_n(Q)}\cdot \frac{1}{n V_n(K)}\int_{S^{n-1}}\psi\!\left(\frac{h_L(\alpha_K(u))}{h_K(\alpha_K(u))}\right)
\rho_K(u)^n\,du\right)\\
&\ge &\varphi\!\left(\frac{V_n(K)}{V_n(Q)}\,\psi\!\left(\frac{1}{n V_n(K)}\int_{S^{n-1}}\frac{h_L(\alpha_K(u))}{h_K(\alpha_K(u))}
\rho_K(u)^n\,du\right)\right)\\
&= &\varphi\!\left(\frac{V_n(K)}{V_n(Q)}\,\psi\!\left(\frac{V_1(K,L)}{ V_n(K)}\right)\right)\\
&\ge &
\varphi\!\left(\frac{V_n(K)}{V_n(Q)}\,\psi\!
\left(\left(\frac{V_n(L)}{V_n(K)}\right)^{1/n}\right)\right),
\end{eqnarray*}
as required.

Suppose that $\varphi$ and $\psi$ are strictly convex and that equality holds in (\ref{Orliczgmvineq}).  Then equality holds throughout the previous display.  As in the proof of \cite[Lemma~9.1]{GHW2014}, equalities in Minkowski's first inequality and in Jensen's inequality with $\psi$ implies that $K$ and $L$ are dilatates of each other.  Then equality in Jensen's inequality with $\varphi$ implies that $K$ and $Q$ are dilatates of each other.
\end{proof}

We omit the proof of the following corollary, which is again similar to that of Corollary~\ref{thm1s}.

\bc\label{onemorecor}
Let $K,L\in {\mathcal{K}}_{o}^n$, and suppose that $\varphi, \psi: (0, \infty)\rightarrow (0, \infty)$ are increasing and strictly convex. If $\widetilde{V}_{\varphi,\psi}(K,M,Q)=\widetilde{V}_{\varphi,\psi}(L,M,Q)$ for $M=\alpha K$, $\alpha>0$, $Q=K$ and for $M=\alpha L$, $\alpha>0$, $Q=L$, then $K$ and $L$ are dilatates of each other. Moreover, $K=L$ unless $\psi$ behaves like $t^n$. If $\psi$ behaves like $t^n$ with $\psi(rt)=r^n\psi(t)$, $t>0$, for some $r>0$, then
$\widetilde{V}_{\varphi,\psi}(K,M,Q)=\widetilde{V}_{\varphi,\psi}(rK,M,Q)$ for all $K,M,Q\in\cK^n_o$.
\ec

\bigskip

\end{document}